\newcommand\boxb[1]{\square_b}
\numberwithin{equation}{section}
\newcommand\paperbody%
\newcommand{\cK}{{\mathcal K}}
\newtheorem{proposition}{Proposition}
\newtheorem{theorem}{Theorem}
\newtheorem{non-theorem}{Non-Theorem}
\theoremstyle{remark}
\newtheorem{definition}{Definition}
\newtheorem{remark}{Remark}
\newcommand{\cT}{\mathcal{T}}
\newcommand{\cL}{\mathcal{L}}
\newcommand{\cS}{\mathcal{S}}
\newcommand{\calU}{\mathcal{U}}
\newcommand{\calN}{\mathcal{N}}
\newcommand{\cX}{\mathcal{X}}
\newcommand{\ba}{\begin{array}}
\newcommand{\ea}{\end{array}}
\begin{document}
\title[Numerical metrics on moduli spaces of Calabi-Yau manifolds]
{Numerical Weil-Petersson metrics on moduli spaces of Calabi-Yau manifolds}

\author{Julien Keller and Sergio Lukic}
\address{ \newline Corresponding address for Julien Keller:  CMI, 39 rue Fr\'ed\'eric Joliot-Curie 13453 Marseille, France.
\newline Corresponding addresses for Sergio Luki\' c: Department of Mathematics, Imperial College, 180 Queen's Gate, London, U.K. 
\newline
Current address:  School of Natural Sciences, Institute for Advanced Study, Princeton NJ 08540, USA.
}
\email{jkeller@cmi.univ-mrs.fr, sergio.lukic@gmail.com}

\begin{abstract} 
We introduce a simple and very fast algorithm to compute Weil-Petersson metrics on moduli spaces of Calabi-Yau varieties. 
Additionally, we introduce a second algorithm to approximate the same metric using Donaldson's quantization link between
infinite and finite dimensional Geometric Invariant Theoretical (GIT) quotients that describe moduli spaces of varieties. Although this second
algorithm is slower and more sophisticated, it can also be used to compute similar 
metrics on other moduli spaces (e.g. moduli spaces of vector bundles on Calabi-Yau varieties). 
We study the convergence properties of both algorithms and provide explicit computer implementations using a family of Calabi-Yau quintic hypersurfaces 
in $\mathbb{P}^4$.
Also, we include discussions on: the existing methods that are used to compute this class of metrics,
the background material that we use to build our algorithms, and how to extend the second algorithm to the vector bundle case.
\end{abstract}

\keywords{Balanced metrics, Numerical geometry,
Weil-Petersson metrics, Calabi-Yau, Geometric Invariant Theory, constant scalar curvature K\"ahler metrics}

\maketitle

\setcounter{tocdepth}{3}

\paperbody

\section{Introduction}

Exploiting the geometric and analytic properties of complex differential manifolds has allowed researchers to attain several impressive results regarding
the existence of solutions of some difficult non-linear PDEs. In particular, Yau's proof of Calabi's 
conjecture \cite{Yau} and Donaldson-Uhlenbeck-Yau's proof of the existence of Hermite-Einstein metrics on stable holomorphic vector bundles, have become
classical examples of how one can connect several complex-geometric objects with solutions of non-linear PDEs.
Although one rarely expects to find exact analytic solutions of such PDEs, one can study their analytic properties by means of
numerical methods. For instance, several numerical techniques introduced by Donaldson \cite{Don} allowed, for the first time, the computation of numerical approximations 
to K\"ahler-Einstein and Hermite-Einstein metrics on complex algebraic manifolds.  In this paper, we extend those techniques to develop numerical methods that we use to compute Weil-Petersson (WP) metrics on moduli spaces of such classes of metrics. Our main focus here is the Weil-Petersson metric on moduli spaces 
of complex structures on polarized Calabi-Yau manifolds. However, we will also discuss how some of our results can be extended to more general moduli spaces. 

The motivation for finding numerical approximations of WP metrics can be found in different sources. For instance, a source of motivation comes from the study of the global Weil-Petersson geometry on moduli spaces of Calabi-Yau manifolds, \cite{DL}. In this case, one of the algorithms introduced in this paper should allow us to estimate Weil-Petersson volumes of moduli spaces in a sensible amount of time and with reasonable precision.
Another source of motivation is the program by Douglas et. al. \cite{Dou}
to numerically compute K\"ahler metrics that appear in effective field theories associated with Calabi-Yau compactifications of string theory.
In the second part of this introduction, we outline the basic calculation of
the effective field theory action associated with these Calabi-Yau compatifications. In particular, we show how a
full characterization of this field theory action involves the computation of Weil-Petersson metrics on spaces of Ricci-flat metrics and/or HYM connections.

The remainder of the paper is organized as follows.
In section 2 we review some general results on moduli spaces of polarized
Calabi-Yau manifolds, define their corresponding Weil-Petersson metrics, and compute a particular example using an exact closed-form expression.
By combining formulas of local deformations of the holomorphic top form under diffeomorphisms, Monte Carlo integration techniques, and 
building on previous theoretical work (e.g. \cite{Tian}, \cite{Todorov}), we introduce a fast algorithm that computes the WP metric in section 3.
We apply this algorithm to evaluate the WP metric on a one-parameter family of Calabi-Yau quintic three-folds
in $\mathbb{P}^4$.
In section 4, we review basic concepts on moduli spaces of polarized varieties from the point of view of Geometric Invariant Theory. 
We introduce a natural sequence of K\"ahler metrics on the moduli space of Calabi-Yau manifolds, and use it  
to approximate the WP metric on the moduli space of Calabi-Yau manifolds. We implement the associated algorithm in the particular case
of the family of quintic three-folds and show how the sequence of metrics converges to the 
Weil-Petersson metric on the moduli space of Ricci-flat metrics.

\subsection{Calabi-Yau compactifications in string theory}

Several approaches that attempt to unify particle physics at high energies employ elegant field theories defined on high dimensional manifolds, such as $X\times \mathbb{R}^4$, 
where $\mathbb{R}^4$ denotes the standard four-dimensional space-time and $X$ denotes a spacelike manifold of dimension larger than zero. 
In this formalism, one recovers four-dimensional physics in the small-size limit of $X$. 
This idea, known as compactification of a field theory, has inspired much work in the interface between geometry and physics. 
In this subsection, we review how evaluating the action functional for these 
four-dimensional field theories requires Weil-Petersson metrics.

\begin{remark}
For the purpose of this introduction, by a \emph{field theory} we mean a functional space $\mathcal{M}$ of
geometric data on a manifold $Y$ (such as Riemannian metrics, connections on a principal bundle on $Y$,
sections of vector bundles, \ldots ), and an action functional $ S\colon \mathcal{M}\to\mathbb{R} $ 
defined on it.
\end{remark}

In a compactification, we consider a field theory on a $D$-dimensional space-time manifold $Y=X\times \mathbb{R}^4$ which 
is the direct product of an $m$-dimensional manifold $X$ with the four-dimensional space-time $\mathbb{R}^4$. $X$ is known as 
the compactification manifold, and it is endowed with a Riemannian metric and other geometric structures that appear in the definition of 
the field theory. In the limit of small radius of $X$, there exists a four-dimensional field theory limit which can be obtained 
by imposing four-dimensional Poincar\'e invariance and
by expanding the fields around solutions of the Euler-Langrange equations associated with the action $S$.
In particular, the most general metric ansatz for a Poincar\'e invariant compactification is
$$
g_{IJ} = \left(
\begin{array}{cc}
 f \eta_{\mu\nu} & 0 \\
 0 & g_{ij} \\
\end{array} \right)
$$
where the tangent space indices are $0 \leq I < 4 + m = D$, $0 \leq \mu < 4$, and
$1 \leq i,j \leq m$. Here, $\eta_{\mu\nu}$ is the Minkowski metric, $g_{ij}$ is a metric on $X$, and 
$f$ is a real-valued function on $X$. Usually, the field theory dynamics of $g_{IJ}$ is governed by the $D$-dimensional 
Hilbert-Einstein action in general relativity. In this case, the vacuum field equations impose $g_{IJ}$ to be Ricci-flat. Given our
metric ansatz, this requires $f$ to be constant, and the metric $g_{ij}$ on $X$ to
be Ricci-flat. 

If the manifold $X$ admits a Ricci flat metric, this metric will often not be unique; rather,
there will exist a moduli space of solutions to the Ricci-flat equations. Physically, one expects
that the fields that specify the choice of the Ricci-flat metric on $X$ will vary slowly
as a function of the space-time coordinates on $\mathbb{R}^4$. General arguments suggest that these variations
must be described by variations of four-dimensional fields, governed by an \emph{Effective Field Theory} (EFT).
For simplicity, by this EFT we mean the four-dimensional field theory that emerges 
in the small radius limit of $X$, when the geometric data on $\mathbb{R}^4\times X$ restricted to 
$X$ satisfies the Euler-Lagrange equations and Poincar\'e invariance on $\mathbb{R}^4$. In this limit, the action functional of the 
EFT is defined on a functional space of geometric data on $\mathbb{R}^4$.

Given an explicit parametrization of the family of Ricci-flat metrics on $X$ (e.g. $g_{ij}(t_a)$ for
some parameters $\{ t_a \}_a$), the EFT action functional can be computed by
promoting the parameters $\{ t_a \}_a$ to $4$-dimensional fields $\{ t_a(x) \}_a$, and by determining the 4-dimensional 
action as a power series derived from the $D$-dimensional action.
For the Hilbert-Einstein action, if we expand in powers of 
$4$-dimensional derivatives, we obtain the four-dimensional action functional
\begin{eqnarray}
S^{GR}_{EFT}&=&\hspace{-0.3cm} \int_{\mathbb{R}^4 \times X} d^{(10)}\mathrm{Vol}\,\, scal(g_{IJ}) \nonumber\\
 &=& \hspace{-0.3cm} \int_{\mathbb{R}^4\times X}  d^4 x d^m y \sqrt{\det g(t)} scal(g_{ij}) + \nonumber \\
&&\hspace{-0.3cm}  \int_X d^m y \sqrt{\det g(t)} g^{ik}(t)
g^{jl}(t) \frac{\partial g_{ij}}{\partial t_a} 
\frac{\partial g_{kl}}{\partial t_b} \times \int_{\mathbb{R}^4} d^4 x \partial_\mu t_{a}(x) \partial^\mu t_{b}(x) + \ldots \label{action_GR}
\end{eqnarray}
where $y^i$ denotes a local coordinate chart on $X$, $x^\mu$ a local coordinate chart on $\mathbb{R}^4$, and 
$scal(g)$ is the scalar curvature associated with the $D$-dimensional metric. 
In general, a direct computation of Eq. \eqref{action_GR} is impossible. This is largely due to the fact
that the Ricci-flat equations cannot be explicitly solved in most of the examples of interest. 

The compactifications derived from the field theory limit of supersymmetric string theories impose further constraints on 
the four-dimensional EFT. 
In this class of theories, the field theory is defined on a manifold $Y$ of dimension $D=10$. Requiring $\calN=1$ 
supersymmetry on the four dimensional EFT and the vanishing of torsion elements fixes $X$ to be a Calabi-Yau 
three-fold. In this case, the four dimensional action functional for the $\{ t_a(x) \}_a$ fields defined in Eq. \eqref{action_GR}
depends on the Weil-Petersson metric on the moduli space of K\"ahler Ricci-flat metrics on $X$. 

These field theories contain several fields in addition to the space-time metric. For instance, the field content of a heterotic string theory (see \cite{GSW, Donag}) 
also involves a principal $E_8 \times E_8$ bundle endowed with a gauge connection $A$; here, $E_8$ denotes the Cartan's exceptional simple Lie group of dimension $248$.
In a Poincar\'e invariant compactification, the theory is defined on a principal $E_8 \times E_8$ bundle $P \to \mathbb{R}^4 \times X$. For every point $x$ on $X$, the restriction of the principal bundle $P$ to $\mathbb{R}^4 \times x$ is trivial, i.e. $P\vert_{\mathbb{R}^4 \times x\hookrightarrow \mathbb{R}^4 \times X}$ is equivalent to $E_8\times \mathbb{R}^4 $.

One can show how in the small radius limit of $X$, the corresponding field theory can be described as an effective gauge theory on $\mathbb{R}^4$ with gauge group $H$. 
The corresponding four-dimensional action can be determined by expanding the ten-dimensional Yang-Mills functional around a background reducible connection $A_0$ on 
$P \to \mathbb{R}^4 \times X$. In particular, if one considers a subgroup $G$ of $E_8$ and takes $A_0$ to be a connection on a principal $G$-subbundle of 
$P \to \mathbb{R}^4 \times X$, the gauge group $H$ of the EFT will be the commutant of $G\hookrightarrow E_8$. 
For instance, in many applications $G$ is chosen to be the special unitary group $SU(r)$, with $2<r<6$. 
In this case, the Euler-Lagrange equations associated with the Yang-Mills functional require $A_0$ to be a Hermite Yang-Mills unitary connection on $P_G\to X$.

Similarly to the K\"ahler Ricci-flat equations, if the bundle $P\to X$ admits a Hermite Yang-Mills connection, it will not be unique; rather there will be a moduli space of $E_8$ connections on $P\to X$ with $G$-holonomy. Although we do not know of a detailed description of such moduli spaces for most examples of interest, we need only to 
work with the space of local deformations around a particular $A_0$ to derive the action functional of the EFT.
In particular, in order to determine the action functional that governs the dynamics of such fields on $\mathbb{R}^4$, one has to
expand the $10$-dimensional action functional in the small radius limit of $X$, 
using small perturbations of $A_0$ that preserve the linearized Yang-Mills equations on $P\to X$ and Poincar\'e invariance on $\mathbb{R}^4$.

More precisely, given a local coordinate chart $\{ z^i,\bar{z}^{\bar{\jmath}} \}_{i,j=1}^3$  on $X$, $\{ x^\mu \}_{\mu=1}^4$ on $\mathbb{R}^4$, and a trivialization of $P$ one can expand the gauge connection $A$ around $A_0$ as
$$A (z, x) = A_{0,i} dz^i + A_{0,\bar{\jmath}} d\bar{z}^{\bar{\jmath}} + A_{\mu}(x)dx^{\mu} +
t^{\ast}_p (x)\frac{\partial  A_{\bar{\jmath}}}{\partial \bar{t}_p} d\bar{z}^{\bar{\jmath}}+ t_p (x)\frac{\partial  A_{i}}{\partial t_p} dz^i + \ldots $$
Here, $A_{\mu}dx^{\mu}$ is the $4$-dimensional $H$-gauge connection and $\{ t_p \}$ is a local coordinate chart on the space of infinitesimal deformations of the connection $A_0$ that preserve the linearized Yang-Mills equations. The ellipsis denotes higher order corrections in $t$ and, also, corrections by terms which do not preserve the linearized Yang-Mills equations. One can assume that both corrections are neglectable at low energies. If we expand the pure Yang-Mills action in $10$ dimensions assuming our Poincar\'e invariant ansatz, we find 
\begin{eqnarray}
S^{YM}_{EFT} &=& \int_{\mathbb{R}^4 \times X} d^{(10)}\mathrm{Vol}\,\, {\rm Tr}\left( F_{IJ} F^{IJ} \right) \nonumber \\
&=& \int_{X} d^{(6)}\mathrm{Vol}\,\, {\rm Tr} \left( \frac{\partial  A_{i}}{\partial t_p}  \frac{\partial  A_{\bar{\jmath}}}{\partial \bar{t}_p} \right) g^{i\bar{\jmath}} \times \int_{\mathbb{R}^4} d^4 x\, \partial_\mu t_p \partial^{\mu} t^{\ast}_{\bar{q}} + \ldots \label{action_YM}.
\end{eqnarray}
Hence, determining the effective action for the $t$ fields requires computing generalized Weil-Petersson metrics 
on the moduli space of $E_8$ connections on $P$ with $G$-holonomy, as defined in Eq. \eqref{action_YM}. The numerical methods that we introduce in this paper are useful when $G=SU(r)$ 
and the principal $SU(r)$-subbundle underlies a family of stable holomorphic vector bundles $E \to X$ (with $c_1(E)=0$, ${\rm rank}\, (E)=r$). In this case, one can use balanced embeddings to approximate the Hermite Yang-Mills connections, identify the space of infinitesimal deformations of the connection $A_0$ with sheaf cohomology groups, and approximate the Weil-Petersson metrics using natural extensions of the ideas introduced in this paper.

\subsection{Notation}

Throughout this paper, $X$ denotes a smooth projective Calabi-Yau manifold of complex dimension $n$; $\cK_X$ denotes its corresponding canonical bundle.
The corresponding holomorphic $n$-form that generates $H^{0}(X,\, \cK_X)$ is denoted by $\nu$. $\cL$ is the defining polarization, 
i.e. an ample line bundle on $X$. We denote by $\omega$ the K\"ahler two-form, with $[ \omega ] = c_1(\cL)$. 
By $ds^2=g_{i\bar{\jmath}}dz^id\bar{z}^{\bar{\jmath}}$ we mean the compatible Riemannian metric on $X$, and by $h$ the compatible Hermitian metric on $\cL$ whose curvature is $\omega$.

\section{Weil-Petersson metrics on moduli of polarized manifolds}

A \emph{holomorphic family} of compact polarized K\"ahler manifolds $(X_t,g_t)$ parametrized by $t\in\cT$ is a complex manifold
$\cX$ together with a proper holomorphic map $\pi\colon \cX \to \cT$ which is of maximal rank. This means that the differential of $\pi$ is surjective everywhere, and that $\pi^{-1}(t)$ is compact for any $t\in\cT$.

Given a base point $0\in\cT$ we say that $\pi^{-1}(t)=X_t$ is a deformation of $X_0$. Locally, $\cX$ is a trivial fiber product $\cX\vert_{\calU}\simeq \calU \times X_t $. If $T_t\cT$ denotes the holomorphic tangent space to $\cT$ at $t$, we can define the infinitesimal deformation or Kodaira-Spencer map:
$$
\rho_t\colon T_t\cT \longrightarrow H^1(X_t,\, TX_t).
$$
where $H^1(X_t,\, TX_t)$ can be identified with the harmonic representatives of (0,1) forms with values in the holomorphic tangent bundle $TX_t=T^{1,0}X_t$. 
In other words, $H^1(X_t, TX_t)\sim H^{0,1}_{\bar{\partial}}(TX_t)$. 
We know that the K\"ahler metric $g_t$ on $X_t$ induces a metric on $\Lambda^{0,1}(TX_t)$. Thus,  for $v_1,
v_2 \in  T_t\cT$, we can define a K\"ahler metric at $t\in \cT$,
\begin{equation}
G(v_1,v_2)=\int_{X_t}\langle \rho(v_1),\rho(v_2)\rangle_{g(t)} d {\rm Vol}(g(t)).\label{WP-def}
\end{equation}
G is possibly degenerate. If $\rho$ is injective and $g(s)$ satisfy an Einstein type condition, one says
that $G$ is a Weil-Petersson metric on the Kuranishi space.

\subsection{Weil-Petersson metric for Calabi-Yau's manifolds}\label{WP-CY}

Suppose now that $\cX \to \cT$ is a family of polarized Calabi-Yau manifolds $(X_t=\pi^{-1}(t),\, \cL_t)$, naturally equipped with a 
unique Ricci-flat K\"ahler metric $g_t$ in a given K\"ahler class.
We can identify the tangent space  $T_t\cT$, at $t \in \cT$, with $H^{0,1}_{\bar{\partial}}(TX_t)$.
This allows us to define the Weil-Petersson metric on $\cT$, the local moduli space of $(X,\, \cL)$, as follows.

\begin{definition}\label{WP-def-CY}
Let $v_1, \, v_2\in T_t\cT \simeq H^{0,1}_{\bar{\partial}}(TX_t)$; then the Weil-Petersson inner product is defined as
$$
\langle v_1, \,v_2 \rangle_{W.P.} := \int_{X_t} v^{i}_{1\bar{k}} \overline{v^{j}_{2\bar{l}}} g_{i\bar{\jmath}} g^{l\bar{k}}\, \mathrm{dVol}.
$$
\end{definition}
In this particular case Tian and Todorov proved the following
\begin{theorem} 
{\rm (Tian-Todorov, \cite{Tian,Todorov})} Let $\pi\colon \cX \to \cT \ni 0$ , $\pi^{-1}(0)=X_0$, be the family of X, then $\cT$ is a non-singular complex analytic space such that
$$
\dim_{\mathbb{C}} \cT = \dim_{\mathbb{C}}\, H^1(X_t,\, TX_t) = \dim_{\mathbb{C}} H^1 (X_t,\, \Omega^{n-1}),
$$
where $TX_t$ denotes the sheaf of holomorphic vector fields on $X_t$, and $\Omega^{n-1}$ the sheaf of holomorphic
$(n-1)$ forms.
\end{theorem}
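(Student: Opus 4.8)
The plan is to establish three separate assertions: that the Kuranishi space $\cT$ is smooth (unobstructed), that its dimension equals $\dim_{\mathbb{C}} H^1(X_t, TX_t)$, and that this number coincides with $\dim_{\mathbb{C}} H^1(X_t, \Omega^{n-1})$. The last equality is the easiest, so I would dispose of it first: since $X$ is Calabi-Yau, the holomorphic volume form $\nu$ trivializes $\cK_X$, so contraction $v \mapsto \iota_v \nu$ defines an isomorphism of sheaves $TX_t \xrightarrow{\sim} \Omega^{n-1}$ (indeed $\Omega^{n-1} \cong TX_t \otimes \cK_X$ and $\cK_X$ is trivial), inducing an isomorphism on every cohomology group and in particular on $H^1$.

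For the smoothness and dimension count I would invoke Kuranishi's deformation theory. Complex structures near $X_0$ are parametrized by Beltrami differentials $\phi \in A^{0,1}(X_0, TX_0)$ solving the Maurer-Cartan (integrability) equation $\dol \phi = \tfrac12 [\phi,\phi]$, and the local moduli space is cut out inside $H^1(X_0, TX_0)$ by an obstruction map valued in $H^2(X_0, TX_0)$. To prove $\cT$ is smooth of dimension $\dim H^1$ it suffices to show the deformation problem is \emph{unobstructed}, i.e. that for a basis $\eta_1, \ldots, \eta_m$ of harmonic $TX$-valued $(0,1)$-forms one can inductively solve for a formal power series $\phi(t) = \sum_i t_i \eta_i + \sum_{|I|\ge 2} \phi_I t^I$ satisfying Maurer-Cartan. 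I would fix the Kuranishi gauge $\dol^* \phi = 0$ and run the standard iteration $\phi = \phi_1 + \tfrac12 \dol^* G [\phi,\phi]$, with $G$ the Green operator; the obstruction at each order is then the harmonic projection of the quadratic term $[\phi,\phi]$.

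The crux---and the step I expect to be the main obstacle---is proving these obstructions vanish, which is exactly where the Calabi-Yau hypothesis enters, through the \emph{Tian-Todorov lemma}. Transporting everything to $(n-1,q)$-forms via $\iota_{(\cdot)}\nu$, the lemma is the algebraic identity asserting that whenever $\iota_\phi \nu$ and $\iota_\psi \nu$ are $\partial$-closed, the bracket $\iota_{[\phi,\psi]}\nu$ is $\partial$-exact. Since the harmonic $\eta_i$ are simultaneously $\partial$- and $\dol$-closed on the compact K\"ahler manifold $X_0$, this feeds into the $\partial\dol$-lemma: the term $[\phi,\phi]$, being $\dol$-closed by construction and $\partial$-exact after contraction with $\nu$, is in fact $\dol$-exact, so its harmonic part vanishes and the iteration proceeds. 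I would carry along the inductive hypothesis that $\iota_{\phi_I}\nu$ remains $\partial$-closed, re-establishing it at each stage via the $\partial\dol$-lemma.

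Finally I would address convergence: the formal solution must be shown to converge in a suitable H\"older (or Sobolev) norm. This is the analytic part, handled by the usual elliptic estimates for $\dol^* G$ together with the algebra property of the norms, bounding $\|\phi_I\|$ by a geometric series so that $\phi(t)$ defines a genuine family over a polydisc. Once convergence is secured, $\phi(t)$ identifies $\cT$ with an open neighborhood of the origin in $H^1(X_0, TX_0)$, proving smoothness and the dimension formula at once. The Tian-Todorov lemma combined with the $\partial\dol$-lemma is the conceptual heart of the argument; the convergence, while technical, is comparatively routine.
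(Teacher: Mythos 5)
The paper offers no proof of this statement to compare against: it is quoted as a known theorem, with the proof deferred entirely to the cited works of Tian and Todorov. Your proposal correctly reconstructs that standard argument in all its essential parts \mhy{} the identification $H^1(X_t,TX_t)\cong H^1(X_t,\Omega^{n-1})$ via contraction with $\nu$ (triviality of $\mathcal{K}_X$), the Kuranishi/Maurer--Cartan setup, unobstructedness via the Tian--Todorov lemma fed into the $\partial\bar\partial$-lemma, and convergence of the power series by elliptic estimates. So the proposal is sound and is, in substance, the proof of the references on which the paper relies.

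One point in your sketch deserves more care. You run the iteration in the Kuranishi gauge, $\phi=\phi_1+\tfrac12\,\bar\partial^{*}G[\phi,\phi]$, while simultaneously carrying the inductive hypothesis that $\iota_{\phi_I}\nu$ is $\partial$-closed. These two choices are compatible only if contraction with $\nu$ intertwines $\bar\partial^{*}$ and the Green operator $G$ with their counterparts on $(n-1,q)$-forms, which is automatic only when $\nu$ is parallel, i.e.\ when the Hodge theory is taken with respect to the Ricci-flat K\"ahler metric furnished by Yau's theorem. With that metric, $\iota_{\phi_I}\nu=\tfrac12\,\bar\partial^{*}G\,\iota_{[\phi,\phi]_I}\nu=\tfrac12\,\bar\partial^{*}G\,\partial\bar\partial\alpha$, and the K\"ahler identities ($G$ commutes with $\partial$, $\bar\partial^{*}$ anticommutes with $\partial$) show this is $\partial$-exact, closing the induction. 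Alternatively, you can avoid the gauge issue altogether by choosing each correction directly: since the order-$I$ obstruction term is $\bar\partial$-closed and $\partial$-exact, the $\partial\bar\partial$-lemma writes it as $\partial\bar\partial\alpha=-\bar\partial\partial\alpha$, and one may simply take $\iota_{\phi_I}\nu=-\partial\alpha$, which is manifestly $\partial$-exact. Either repair is standard; without one of them, the step ``re-establishing the inductive hypothesis at each stage'' is asserted rather than proved.
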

There exists a one-to-one correspondence between $H^{0,1}_{\bar{\partial}}(TX_t)$ and $H^1 (X_t,\, \Omega^{n-1})$ given 
by the interior product and the global holomorphic $n$-form $\nu_t$ on $X_t$. This means that one can evaluate
the Weil-Petersson metric in terms of the standard cup product on $H^{n-1,1}(X_t)$. In particular, the K\"ahler potential for the Weil-Petersson metric
can be simply written as 
\begin{equation}
\Psi (t,\bar{t}) = (-1)^{\frac{n(n-1)}{2}} i^{n-2} \log \left( \int_{X} \nu_t \wedge \bar{\nu_t} \right).
\label{kahlerpotential}
\end{equation}
This closed-form expression can be used to evaluate the WP metric exactly. For instance, if we fix the differential structure on $X_t$ and consider 
variations of the complex structure in the holomorphic top form $\nu_t$, one can evaluate $\partial\bar{\partial} \Psi$ by computing differentials $\frac{\partial \nu_t}{\partial t_a}$, with $\frac{\partial}{\partial t_a}$ a basis for $T_t \cT= H^1 (X_t,\, \Omega^{n-1})$. Indeed, we will use this approach in the next section, 
where we introduce an algorithm to numerically evaluate $\partial\bar{\partial} \Psi$. However, a more traditional approach
to computing $\partial\bar{\partial} \Psi$ involves using the standard cup product to express $\int_{X} \nu_t \wedge \bar{\nu_t}$ as a function of $t$. 
This approach was first used by \cite{Can} in the context of Calabi-Yau compactifications, as we show in the following subsection.

\subsection{Example: the Quintic in $\mathbb{P}^4$} 
In this paper we will study different K\"ahler metrics on a family of quintic 
hypersurfaces $X=Q$ in $\mathbb{P}^4$. The particular representation of this family of Calabi-Yau three-folds is very convenient
for the study of the analytical and geometrical objects that interest us here. The Hodge numbers of this class of three-folds are
$h^{1,1}=1$ and $\dim_{\mathbb{C}} H^1 (Q,\, \Omega^{2})=h^{2,\,1}=101$. Additionally, one
can describe its moduli space explicitly. If we define
$$
W = \{\, P\, \vert \, P\,\, {\rm a\,\, homogeneous\,\, quintic\,\, polynomial\,\, of } \,\, Z_0, Z_1, Z_2, Z_3, Z_4 \},
$$
one can verify that $\dim W = 126$. Hence, the parameter space $t \in \mathbb{P} W = \mathbb{P}^{125}$ consists of
a family of quintic hyper-surfaces in $\mathbb{P}^4$ defined by the zero-loci of the quintic polynomials $t\in\mathbb{P}^{125}$. 
As two hypersurfaces that differ by an element in ${\rm Aut}(\mathbb{P}^4 )$
are equivalent, and there exists a divisor $\mathcal{D}$ in $\mathbb{P}^{125}$ characterizing
the singular hypersurfaces in $\mathbb{P}^4$, the moduli space of smooth quintics $Q$ is given by
$$
\mathcal{M} = \left( \mathbb{P}^{125} \backslash \mathcal{D} \right) / {\rm Aut}(\mathbb{P}^4 ).
$$
The dimension of the moduli space is 101, as expected.

For simplicity, in this paper we will study the one-dimensional subspace of complex deformations of the quintic defined by
the family of polynomials
\begin{equation}
P_t(Z)=Z_{0}^5+Z_{1}^5+Z_{2}^5+Z_{3}^5+Z_{4}^5 - 5t Z_{0} Z_{1} Z_{2} Z_{3} Z_{4},
\label{quintic}
\end{equation}As $t$ and $t \exp (2\sqrt{-1}\pi l/5)$, for any $l\in \mathbb{Z}$, represent the same variety, the fundamental subset of the $t$-plane that
parametrizes the family can be chosen to be $\{\, t\, \vert\, 0\leq \arg(t) <  2\pi /5 \,\, {\rm and}\,\, t\neq 1 \}$. If $t$ is a fifth root of unity 
($t=\exp (2\sqrt{-1}\pi l /5)$ for any $l\in \mathbb{Z}$), the associated quintic is not smooth and develops double-point singularities.
\medskip

\noindent \textbf{The Weil-Petersson metric on the family of Quintics.} Candelas et. al. \cite{Can} computed 
the K\"ahler potential defined in Eq.\eqref{kahlerpotential} by evaluating the volume 
$\int_{X} \nu_t \wedge \bar{\nu_t}$ by means of cup products. 
In particular, they constructed 
a symplectic basis of 3-cycles $(A^a,\,B_b)$ in $H_3(Q_t,\,\mathbb{Z})$ for the family of quintics defined in Eq. \eqref{quintic}. The intersection numbers
of the elements of this basis of
3-cycles are
$$
A^a\cap B_b = \delta^a_b,\quad\, A^a\cap A^b = 0,\quad\,B_a\cap B_b = 0.
$$
Also, they considered the dual basis of three-forms $(\alpha_a,\,\beta^b)$ in $H^3(Q_t,\,\mathbb{Z})$
$$
\int_{A^a} \alpha_b = \delta^a_b,\quad\, \int_{B_a} \beta^b = \delta_a^b,
$$
with the other integrals vanishing. The wedge product of the elements of the basis of three-forms satisfy the integrals
$$
\int_Q \alpha_a\wedge\beta^b = \delta_a^b, \quad\, \int_Q \alpha_a\wedge\alpha_b=\int_Q \beta^a\wedge\beta^b=0.
$$
In this case, the holomorphic three-form $\nu_t$ can be written using the basis of three-forms as
$$
\nu_t = z^a\alpha_a - \mathcal{G}_b\beta^b,
$$
and the volume of $Q_t$ can be written as
$$
\int_{Q} \nu_t \wedge \bar{\nu_t} = \bar{z}^a \mathcal{G}_a - z^a\overline{\mathcal{G}}_a.
$$
Using this expression, the Weil-Petersson metric is simply
\begin{equation}
g_{t\bar{t}}=-i\partial_t \overline{\partial}_t \log \left(\bar{z}^a \mathcal{G}_a - z^a\overline{\mathcal{G}}_a \right).
\label{cande}
\end{equation}
\begin{figure}
\begin{center}
    \includegraphics[width=5in]{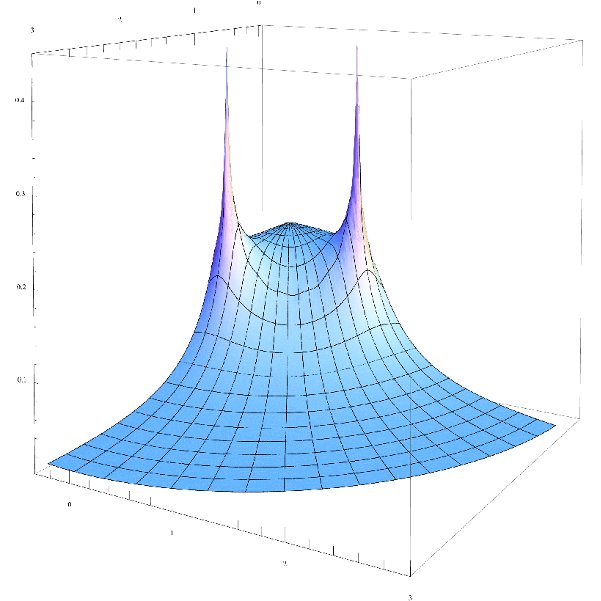}
\end{center}
  \caption{Weil-Petersson metric (vertical axis) on the $t$-plane (horizontal plane) of 1-dimensional moduli of Calabi-Yau Quintic 3-folds,  $Z_{0}^5+Z_{1}^5+Z_{2}^5+Z_{3}^5+Z_{4}^5 - 5t Z_{0} Z_{1} Z_{2} Z_{3} Z_{4}$.}
    \label{candelas}
\end{figure}
Hence, in order to determine the Weil-Petersson metric on the family of quintics, 
it is sufficient to evaluate the periods $z^a =\int_{A^a} \nu_t$ and $\mathcal{G}_b= \int_{B_b}\nu_t$. To that end, we consider the vector space  $V_j$ generated by the vectors 
$$\left(\begin{matrix}
\frac{\partial^k}{\partial t^k}z^a(t)\\
\frac{\partial^k}{\partial t^k}\mathcal{G}_b(t)
\end{matrix}\right),$$
for $0\leq k\leq j$. For generic values of $t$  
the dimension of $V_j$ must be constant; in our case,  $\dim V_j$ cannot be larger than $4$. Thus, by expressing any element of the over-complete basis of
$V_5$ as a function of the remaining elements that form a complete basis of $V_5$, we obtain a non-trivial ordinary differential equation relating different periods. 
This equation is known as the Picard-Fuchs equation, and we refer to \cite{Mor} for more mathematical details on this topic. The form of these equations depends on the particular choice of local coordinates on the space of deformations and on the choice of $\nu_t$. 
In particular, the solutions of the Picard-Fuchs equations may exhibit a singular behavior. However, the types of singularities that can occur here are well characterized. 
In the case of the family of quintics, the Picard-Fuchs equations are generalized hypergeometric equations and can be solved by expanding the 
integrands of the periods in powers of $t$. Each coefficient of the power series leads to an integral that can be evaluated using residue formulas. 
The periods are then extended using analytic continuation to the fundamental domains $|t|<1$ with $0<\arg(t)<\frac{2\pi}{5}$, and $|t|>1$ with $0<\arg(t)<\frac{2\pi}{5}$. 
We have written two simple programs in Mathematica and Maple, for the family of quintics in Eq. \eqref{quintic}, that numerically integrate the power series that define the periods. We then used these integrals to compute the WP metric. In Fig. \ref{candelas}, we show our evaluation of Eq. \eqref{cande} for $0< \vert t \vert \leq 3 $ and $0\leq \arg(t) <  2\pi /5$.  
Although the periods can be integrated exactly at the singular points ($t=1,\infty$), it is difficult to obtain simple formulas
in the more general case (e.g. the Calabi-Yau manifold is not a hypersurface) . For this reason, we explore a different approach to computing the WP metric in the next section.

\section{Numerical evaluation of the Weil-Petersson metrics using deformations of the holomorphic $n$-form}
\subsection{Description of the method}
In this section we describe how to compute Weil-Petersson metrics by taking infinitesimal deformations of $\nu_t$ associated with changes 
of the complex structure on $X$. First, we make an important remark regarding the notation. By $X$ we denote a Calabi-Yau differentiable manifold with no complex structure defined on it. $X_{t}$ denotes the same differentiable manifold endowed with an integrable complex structure parametrized by $t$. We denote by $U\subset X$ an open subset of the differentiable manifold $X$, such that $U\subset X$ is independent of any complex structure one defines on $X$.

Every tangent vector on the moduli space of complex structures, $v^a \frac{\partial}{\partial t_a} \in T_{t_0}\cT$, yields an infinitesimal deformation of the complex structure on $X_{t_0}$. In particular,
in any local coordinate patch on $U\subset X$ we can relate the holomorphic coordinates on $X_{t_0}$ with the holomorphic ones
on $X_{t_0+v}$ by introducing an appropriate infinitesimal diffeomorphism. Let $\{ w^i \}_{i=1}^n$ be a local holomorphic coordinate system for $X_{t_0}$ on $U\subset X$, and $\{ y^i \}_{i=1}^n$ be a local holomorphic coordinate system for $X_{t_0+v}$ on the same subset $U$.  Therefore, on $U$, we can relate the $w$-coordinates and the $y$-coordinates as:
\begin{equation}
y^i = w^i + v^{a}\vartheta_a^{i}(w,\bar{w}) + O(v^2),
\label{diffeo}
\end{equation}
with $\vartheta$ a non-holomorphic section of $T^{1,0}X_{t_0}$, and $\{ \frac{\partial}{\partial t_a} \}_a$ a basis for $T_{t_0}\cT$.

Hence, using the $w$-coordinate system, we can write the holomorphic top form $\nu_{t_0 + v}$ on $X_{t_0 + v}$
as a non-holomorphic $n$-form in $\Omega^{n,0}(X_{t_0})\oplus \Omega^{n-1,1}(X_{t_0})$. In particular,
\begin{equation}
\nu_{t_0 + v} = \nu_{t_0} + v^{a}\partial_{t_a} \nu_{t_0} + O(v^2),
\label{threeform}
\end{equation}
where the $O(v^2)$ terms are irrelevant for the purpose of evaluating the WP metric. The term $\partial_{t_a} \nu_{t_0}$ is computed as the pull-back of the infinitesimal diffeomorphism defined by $\vartheta_a^{i}(w,\bar{w})$. Thus, given a basis of deformations $\partial_{t_a} \nu_{t_0}\in \Omega^{n,0}(X_{t_0})\oplus \Omega^{n-1,1}(X_{t_0})$ and vectors $v_1,\, v_2\in T_{t_0}\cT$, we can write the Weil-Petersson inner product as
\begin{equation}
\langle v_1,\, v_2\rangle_{WP} = -\frac{v_1^{a} \overline{v_2^{b}}\int_X \partial_{t_a} \nu_{t_0} \wedge \overline{\partial_{t_b} \nu_{t_0}} }{\int_X \nu_{t_0}\wedge \overline{\nu_{t_0}}} + \frac{v_1^{a}\overline{v_2^{b}}  \int_X \partial_{t_a} \nu_{t_0} \wedge \overline{\nu_{t_0}} \int_X \nu_{t_0}\wedge \overline{\partial_{t_b} \nu_{t_0}} }{\left( \int_X \nu_{t_0}\wedge \overline{\nu_{t_0}} \right)^2}, \label{WPexact}
\end{equation}
where we have expanded the K\"ahler potential in Eq. \eqref{kahlerpotential} using the deformations of $n$-forms defined by Eq. \eqref{threeform}.
Therefore, evaluating Eq. \eqref{WPexact} requires us to solve two main technical problems:
\begin{itemize}
\item A choice of $\vartheta_a^{i}(w,\bar{w})$, which is not unique and depends on the particular geometry of the Calabi-Yau manifold.
\item Computing several integrals on $X$.
\end{itemize}
In the remainder of this subsection we derive closed-form expressions for $\vartheta_a^{i}(w,\bar{w})$ and $\partial_{t_a} \nu_{t_0} $ in the case of 
complete intersection Calabi-Yau manifolds.
In subsection 3.2 we introduce a Monte-Carlo approach for evaluating integrals on $X$. We exhibit the resulting Weil-Petersson metric on the one-parameter family of quintics in subsection 3.3.

If $X_{t_0}$ is a complete intersection Calabi Yau manifold, there exists a natural choice for $\vartheta_a^{i}(w,\bar{w})$ that we describe as follows. 
Let $\{ P_\alpha(Z) \}_{\alpha=1}^{m-n}$ be a basis of homogeneous polynomials in $\mathbb{P}^m$ whose common
zero loci define $X_{t_0}$. Let us suppose that given two independent infinitesimal deformations of the complex structure, $v_1, \, v_2\in T_{t_0}\cT \simeq H^{0,1}_{\bar{\partial}}(TX_{t_0})$, we can find two sets of polynomials,
$\{P_\alpha(Z)+ v_1^a  \partial_{t_a} P_\alpha(Z) \}_{\alpha=1}^{m-n}$ and $\{ P_\alpha(Z)+v_2^a  \partial_{t_a} P_\alpha(Z) \}_{\alpha=1}^{m-n}$, that
parametrize isomorphic deformations of the complex structure. We set a coordinate atlas on $X\subset\mathbb{P}^m$ by choosing inhomogeneous local coordinates $\{ w_i=Z_i/Z_0\}_{i=1}^m$ on $\mathbb{P}^m$, $n$ coordinates as local coordinates on $U\subset X$, and the remaining $n-m$ coordinates as dependent of the $n$ coordinates on $U\subset X\subset \mathbb{P}^m$. In other words, for any point $x\in X$, by making a unitary change of coordinates on $\mathbb{P}^m$ we can always set $\{w_i \}_{i=1}^n$ to be a local coordinate system on an open subset of $X_{t_0}$ that contains $x$, while the remaining coordinates $\{w_i = w_i(w_1,\ldots,x_n) \}_{i=n+1}^m$ on $X_{t_0}\subset\mathbb{P}^m$ can be expressed as a function of $\{w_i \}_{i=1}^n$. We write the defining polynomials in inhomogenous coordinates, as
\begin{eqnarray*}
p_\alpha(w) =p_\alpha(w,t=t_0)&=& P_\alpha(Z,t=t_0)/Z_0^{\deg P_\alpha},\\
\partial_{t_a} p_\alpha(w)=\partial_{t_a} p_\alpha(w,t)\vert_{t=t_0} &=& \partial_{t_a}  (P_\alpha(Z,t)/Z_0^{\deg P_\alpha}) \vert_{t=t_0},
\end{eqnarray*}
where $\deg P_\alpha\in\mathbb{Z}^+$ is the degree of the homogeneous polynomial $P_\alpha$. If $\vartheta_a^{i}(w,\bar{w})$ are vector 
fields on $X_{t_0}\subset\mathbb{P}^m$ corresponding to the deformations $\{  \partial_{t_a}  p_\alpha(w) \}$, and
$\{y_i \}_{i=1}^{m}$ is a holomorphic local coordinate system on $X_{t_0+v}\subset\mathbb{P}^m$,
the following equation holds for an infinitesimal variation $v^b \frac{\partial}{\partial t_b}$ on the moduli space:
\begin{equation}
p_\alpha(y)+v^b  \partial_{t_b}  p_\alpha(y) = 0 = p_\alpha(w)+ v^b  \frac{\partial p_\alpha(w)}{\partial w_i} \vartheta_b^{i}(w,\bar{w}) 
+ v^b   \partial_{t_b}  p_\alpha(w) + O(v^2). \label{def_pol}
\end{equation}
Here, we have used the formula for the change of $y$ and $w$ coordinates defined by Eq. \eqref{diffeo}. Now, we can introduce a particular 
expression for the vector field $\vartheta_a^{i}(w,\bar{w})$.
\begin{proposition}\label{prop1}
Let $G_{i\bar{\jmath}}$ be a Fubini-Study metric on $\mathbb{P}^m$. Let $H_{\alpha\bar{\beta}}$ be the elements
$$
H_{\alpha\bar{\beta}}=G^{i\bar{\jmath}}\frac{\partial p_\alpha(w)}{\partial w_i}\frac{\partial \bar{p}_{\bar{\beta}}(\bar{w})}{\overline{\partial} \overline{w}_{\bar{\jmath}}}.
$$
Then, a natural choice for $\vartheta_a^{i}(w,\bar{w})$ is
\begin{equation}
\vartheta_a^{i}(w,\bar{w}) = - \left( H^{-1} \right)^{\bar{\beta}\gamma}G^{i\bar{\jmath}} \frac{\partial \bar{p}_{\bar{\beta}}(\bar{w})}{\overline{\partial} \overline{w}_{\bar{\jmath}}} \partial_{t_a} p_\gamma(w). \label{vectorfield}
\end{equation}
\end{proposition}
The proof is straightforward by substituting $\vartheta_a^{i}(w,\bar{w})$ into Eq. \eqref{def_pol} with $v^b\frac{\partial}{\partial t_b}=\frac{\partial}{\partial t_a}$, 
and by using $p_\alpha(w)=0$, as $w$ lies on $X_{t_0}\subset\mathbb{P}^m$.

By combining Eq. \eqref{vectorfield} for $\vartheta_a^{i}(w,\bar{w})$ and Eq. \eqref{def_pol},
we can calculate the deformation of $\nu_t$ under the infinitesimal diffeomorphism defined in Eq. \eqref{diffeo}. 
In particular, if 
\begin{equation}
\nu_{t_0 +v}=N_{i_1,\ldots , i_n}(y)dy^{i_1}\wedge\ldots\wedge dy^{i_n},\label{form2}
\end{equation}
is the holomorphic $n$-form on $X_{t_0+v}\subset\mathbb{P}^m$, $y^i = w^i + v^{a}\vartheta_a^{i}(w,\bar{w}) + O(v^2)$,
and 
\begin{equation}
dy^i = dw^i + v^{a}\frac{\partial \vartheta_a^{i}(w,\bar{w})}{\partial w^j}dw^j + v^{a}\frac{\partial \vartheta_a^{i}(w,\bar{w})}{\bar{\partial} \bar{w}^{\bar{\jmath}}}d\bar{w}^{\bar{\jmath}} + O(v^2),\label{jacobian}
\end{equation}
we can expand Eq. \eqref{form2} as in Eq. \eqref{threeform}, and determine the term 
$\frac{\partial \nu_{t}}{\partial {t_a}}(t_0)$ that we need to evaluate the Weil-Petersson metric by means of Eq. \eqref{WPexact}.

For simplicity, we only provide an explicit expression of $\partial_{t_a} \nu_{t}(t_0)$ in the case $m-n=1$, i.e. $X_{t_0}$ is a 
hypersurface defined as the zero locus of a single polynomial $p(w)$. By the adjunction formula we know that $\nu_{t_0 + v}$ is the pull-back of a 
meromorphic $n$-form on $\mathbb{P}^{n+1}$ that obeys the formula
\begin{equation}
\prod_{i=1}^{n+1}dy^i = d\left( p(y)+v^a \partial_{t_a} p(y) \right)\wedge \nu_{t_0 + v}. \label{residue}
\end{equation}
Using equations \eqref{threeform} and \eqref{residue} and the transformation of $dy^i$ specified in Eq. \eqref{jacobian},  
one can compute $\partial_{t_a} \nu_{t_0}$ as

\begin{align}
\partial_{t_a} \nu_{t_0}&=-\frac{1}{\frac{\partial p}{\partial w^{n+1}}}\left(\sum_{i=1}^{n+1} \frac{\partial \vartheta_a^{i}(w,\bar{w}) }{ \partial w^i} \right) \prod_{i=1}^{n}dw^i \label{diffform} \\
-  &\sum_{i,j=1}^{n}  \frac{(-1)^{n-i}}{\frac{\partial p}{\partial w^{n+1}}}  \left( \frac{\partial \vartheta_a^{i}(w,\bar{w})}{\partial \bar{w}^{\bar{\jmath}}} +  \frac{\partial \bar{w}^{\overline{n+1}}}{\partial \bar{w}^{\bar{\jmath}}} \frac{\partial \vartheta_a^{i}(w,\bar{w})}{\partial \bar{w}^{\overline{n+1}}} \right)dw^1 \ldots \widehat{dw^i} \ldots  dw^n d\bar{w}^{\bar{\jmath}}  \nonumber \\
+ & \ldots \nonumber 
\end{align}
Here, the differentials $dw^i$ obey the Grassmann algebra of forms, $\widehat{dw^i}$ denotes
the omission of $dw^i$, and the final ellipsis denotes further terms which do not contribute to Eq. \eqref{WPexact}. 
Now, equipped with a local formula for the integrands in Eq. \eqref{WPexact}, 
we need a numerical method to evaluate the integrals that appear therein.

\newpage
\subsection{Monte Carlo integration on varieties}
\subsubsection{Importance Sampling}
In order to compute the Weil-Petersson metric using Eq. \eqref{WPexact}, one needs to evaluate multiple integrals of the type
\begin{equation}
\int_X f \nu\wedge\overline{\nu}.\label{integral}
\end{equation}
We can approximate such integrals using a standard Monte Carlo method known as Importance Sampling (IS).
In particular, as generating random samples of points on $X$ under the measure $\nu\wedge\overline{\nu}$ is difficult, one uses
instead an auxiliary measure $d\mu$. It is assumed that one can easily generate
random point sets $\{ q_l \in X \}_{1\leq l\leq N_{points}}$ on $X$ uniformly distributed under $d\mu$. 
Hence, by defining the mass function $m(x) = \nu\wedge\overline{\nu} /d\mu (x)$, we can approximate Eq. \eqref{integral} as
\begin{equation}
\int_X f \nu\wedge\overline{\nu} \simeq \frac{\mathrm{Vol}(X)}{\sum_{l=1}^{N_{points}} m_l} \sum_{l=1}^{N_{points}} f(q_l) m_l + O\left( N_{points}^{-1/2}\right), \label{montecarlos}
\end{equation}
where $N_{points}$ is the number of points used, $m_l=m(q_l)$ and $O(N_{points}^{-1/2})$ is the asymptotic decay of the standard error of the integral for large $N_{points}$.

In the case of a polarized manifold with a very ample line bundle $\cL$ we generate the point set and the auxiliary measure $d\mu$ using the Kodaira embedding 
$\imath\colon X\hookrightarrow \mathbb{P} H^{0}(X,\,\cL)^\ast$. In particular, we endow the ambient projective space $\mathbb{P} H^{0}(X,\,\cL)^\ast$ with a Fubini-Study metric $\omega_{FS}$
and take random sections $\sigma$ in $\mathbb{P} H^{0}(X,\,\cL)^\ast$ with respect to the compatible volume form. 
The zero locus of such random sections $\sigma$ are divisors with associated zero currents $T_\sigma$. One can show \cite{SZ} that the expected zero current is:
$$
E(T_\sigma) = \imath ^\ast \omega_{FS}.
$$
This implies that the expected common zero loci associated with $n$ independent random sections in $\mathbb{P} H^{0}(X,\,\cL)^\ast$ consists of $\int_X c_1(\cL)^n$ points on $X$ uniformly distributed under
$$
E(T_{\sigma_1\ldots\sigma_n}) = \frac{(\imath ^\ast \omega_{FS})^n}{n!}.
$$
Therefore, in our application of IS to integrate functions on Calabi-Yau manifolds, we consider $d\mu= (\imath ^\ast \omega_{FS})^n/n!$ to be the auxiliary measure. Also,
we generate samples of random points  by taking the common zero loci of $n$ independent random sections. The associated mass function is then 
\begin{equation}
m(x)=n!\frac{\nu\wedge\overline{\nu}}{(\imath ^\ast \omega_{FS})^n}(x).
\label{massformula}
\end{equation}

\noindent{\textbf{Example.}} As an application of this method we consider the elliptic curve $E$ in $\mathbb{P}^2$ defined as
the zero locus of the Weierstrass cubic polynomial
$$
Z_2^2 Z_0 = 4Z_1^3-60G_4 (i)Z_1 Z_0^2.
$$
Here, $G_4(i)$ is the Eisenstein series of
index 4 evaluated at the complex parameter $\tau = i$ ($G_4(i)\simeq -3.151212\ldots$).
\begin{figure}
\begin{center}
    \includegraphics[width=3in]{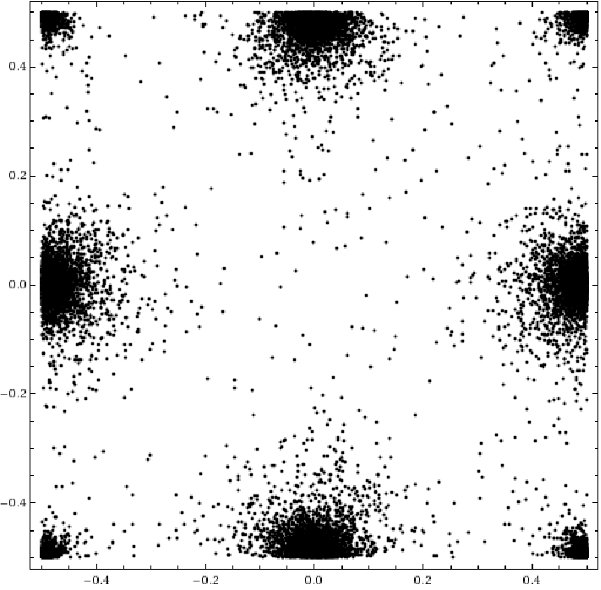}
\end{center}
  \caption{Distribution of random points on the Weierstrass cubic $Z_2^2 Z_0 = 4Z_1^3-60G_4 (i)Z_1 Z_0^2 $ resulting from the intersection with random $\mathbb{P}^1$'s 
on $\mathbb{P}^2$. Here, $\mathbb{P}^2$ is endowed with the Fubini-Study metric defined by the K\"ahler potential $\log(1+\vert Z_1/Z_0 \vert^2+\vert Z_2/Z_0 \vert^2)$.}
    \label{montecarlo}
\end{figure}
This elliptic curve can also be represented as the square torus $\mathbb{C}/\mathbb{Z}^2$ embedded
in $\mathbb{P}^2$. The Calabi-Yau area $2$-form corresponds to the flat area form
inherited from the complex plane $\mathbb{C}$, in the quotient $\mathbb{C}/\mathbb{Z}^2$. Intersections of divisors associated with random sections
in $\mathbb{P}^2 = \mathbb{P}H^{0}(E, \mathcal{O}(3pts))$ with the elliptic curve are equivalent to intersections of 
the cubic $E\hookrightarrow\mathbb{P}^2$ with random projective lines $\mathbb{P}^1\hookrightarrow\mathbb{P}^2$. 
The Fubini-Study area two-form on $E$ yields the particular distribution of points shown in Fig. \ref{montecarlo}. 
As we have derived a closed-form expression for $\nu\wedge\bar{\nu}/\imath ^\ast \omega_{FS}$, we can use Importance Sampling on $E$ to 
evaluate integrals with area two-form $\nu\wedge\bar{\nu}$
by means of random point sets associated with these random sections.

\noindent{ $ $}

\subsubsection{Refinements of Importance Sampling}
In many applications the most efficient method to evaluate integrals is the standard Importance Sampling method applied to projective manifolds, as we have just 
described. 
This strategy requires a single optimal Fubini-Study  metric and a point set generated by intersecting divisors associated with independent random sections. 
The optimal Fubini-Study metric could be the $\nu$-balanced metric \cite{Don} (see Eq. \eqref{tamp} in section below), which aims to approximate
the K\"ahler Ricci flat metric. Once the Fubini-Study metric has been fixed, 
the minimum number of points used in the Monte Carlo sample can be adjusted to attain any given degree of accuracy and precision in the value of the integral.

If the ratio of the maximum over the minimum of the mass function defined in Eq. \eqref{massformula} is very large,
the convergence of the IS method will likely be worse. In this case, one can increase
the number of points to approximate the integrals with more accuracy and precision. 
However, if we integrate functions with extremely slow evaluation maps (slow from the computational point of view)
we will not be able to use a high number of points to reduce the error. 
A different strategy that we explore in this subsection 
involves improving the distribution of the point set while keeping the number of points constant.
In particular, one can use an optimal combination of several Fubini-Study metrics and subsets on $X$ to generate an Improved 
Point Set. 
\begin{remark}
By an \emph{Improved Point Set} (IPS), we mean a distribution of points on $X$ whose associated mass formula $m_{IPS}(x)$
obeys 
$$
\frac{{\rm max} (m_{IPS}(x))}{{\rm min} (m_{IPS}(x))} \ll \frac{{\rm max} (m(x))}{ {\rm min} (m(x))}.
$$
\end{remark}

A natural strategy to construct such an IPS is to consider several Fubini-Study metrics $\{ \omega^q_{FS} \}_{q=1}^{Q}$ and the associated mass function
\begin{eqnarray}
m_{IPS}(x) &=& n!\frac{\nu\wedge\overline{\nu}}{(\imath ^\ast \omega^q_{FS})^n}(x) \text{ for } q \in [1,\, Q]
\nonumber \\
&&\text{ such that }\left\vert n!\frac{\nu\wedge\overline{\nu}}{(\imath ^\ast \omega^q_{FS})^n}(x) - 1 \right\vert = \min_{1\leq j \leq Q} \left\vert n!\frac{\nu\wedge\overline{\nu}}{(\imath ^\ast \omega^j_{FS})^n}(x) - 1 \right\vert. \label{IPSmassformula}
\end{eqnarray}
Here, for simplicity we normalized volumes such that $n!\int_X \nu\wedge\overline{\nu} = \int_X (\imath ^\ast \omega^q_{FS})^n$. The mass function in Eq. \eqref{IPSmassformula} 
implies that we can decompose $X$ as a disjoint
union of open subsets $X=\coprod_{q=1}^Q U_q$ with non-zero volume. In other words, each Fubini-Study metric 
$\omega^r_{FS}\in \{ \omega^q_{FS} \}_{q=1}^{Q}$ is associated with the subset $U_r\subset X$ defined by
$$
x\in U_r \,\,{\rm if}\,\, m_{IPS}(x) = n!\frac{\nu\wedge\overline{\nu}}{(\imath ^\ast \omega^r_{FS})^n}(x).
$$
In this refinement of the Importance Sampling method one has to generate point sets by considering only random points that are located in appropriate
subsets. In particular, sets of $n$ independent random sections with respect to the Fubini-Study volume form $\left( \omega^q_{FS} \right)^n$ will yield random points on 
the whole manifold $X$; however, only those points that lie on $U_q\subset X$ can be accepted. 
If a point $y\notin U_q $ is generated as the common zero locus of $n$ independent random sections under the $q^{th}$-measure, it will not be included in the point set. 
This means that there exists another subset $U_r$ and a metric $\omega^r_{FS}$, with $y\in U_r$, such that $n$-tuples of random sections under the $r^{th}$-measure 
will generate points on $U_r$ that are more similarly distributed to the reference volume form $\nu\wedge\overline{\nu}$. 

  \begin{figure}
  \begin{center}
  \begin{tabular}{ccc}
      \includegraphics[width=1.5in]{adaptative_1chambers_cubic.jpg}&
      \includegraphics[width=1.5in]{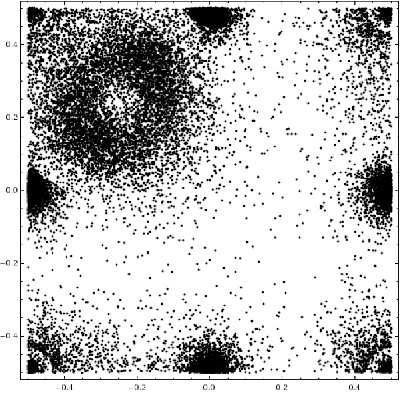}&
      \includegraphics[width=1.5in]{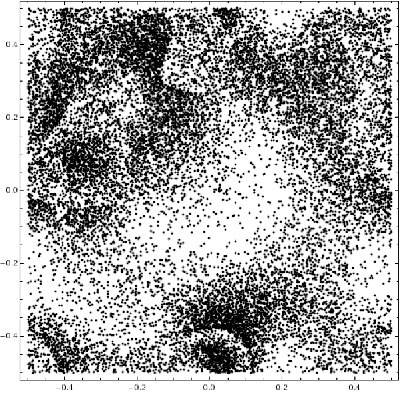}\\ \\
      \includegraphics[width=1.5in]{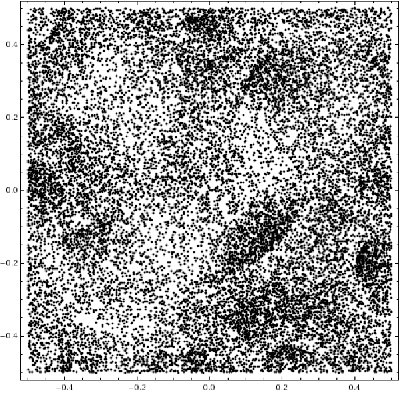}&
      \includegraphics[width=1.5in]{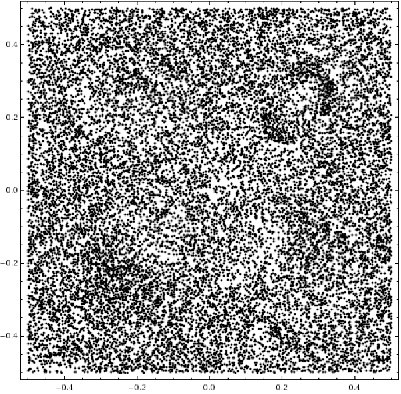}&
      \includegraphics[width=1.5in]{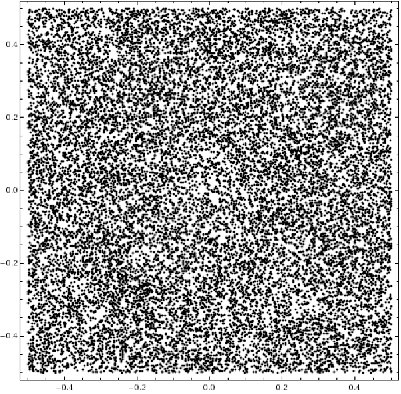}
  \end{tabular}
  \end{center}
    \caption{Distribution of 20,000 random points on the Weierstrass cubic, for 
  1, 2, 3, 5, 11, and  19 optimally chosen Fubini-Study metrics and subsets.}
      \label{refined_montecarlo}
  \end{figure}

There is not a unique answer to the question of how to generate optimal sets of Fubini-Study metrics and subsets on $X$ when $Q>1$.
Here, we describe the particular solution that we used in this paper. Before we discuss the details
of our approach to generate these IPS, we need a definition.

\begin{definition}
Given any point $x\in X$, there exists an $x$-\emph{mass-one} Fubini-Study metric $\omega_{FS}(\lambda_x)$
on $\mathbb{P} H^{0}(X,\,\cL)^\ast$ that satisfies
$$
n!\frac{\nu\wedge\overline{\nu}}{\imath ^\ast \omega_{FS}(\lambda_x)^n}(x) = 1.
$$
\end{definition}
One can always construct this metric $\omega_{FS}(\lambda_x)$ as follows. Let us consider an orthonormal basis $\{ s_\alpha \}_{\alpha=1}^{N+1}$ for $H^{0}(X,\,\cL)$ with respect to the $\nu$-balanced Fubini-Study metric. Now, in this basis, we introduce the matrix $\lambda_x$ 
$$
\lambda_x=\frac{1}{1+\epsilon}\left( \mathbf{1} + \epsilon P_x \right),
$$
with $\mathbf{1}$ the identity matrix, $P_x=P_x^2$ the projector on the ray generated by $x\mapsto \mathbb{P} H^{0}(X,\,\cL)^\ast$, and  
$$
\epsilon=\left( n!\frac{\nu\wedge\overline{\nu}}{\imath ^\ast \omega_{FS}(\mathbf{1})^n} (x) \right)^{\frac{1}{n}}-1.
$$
It is then straightforward to show that if 
$
\log \left( \sum_{\alpha\beta} \left(\lambda^{-1}_x\right) ^{\bar{\beta}\alpha}s_\alpha\bar{s}_{\bar{\beta}} \right)
$
is the K\"ahler potential for $\omega_{FS}(\lambda_x)$, then
$$
n!\frac{\nu\wedge\overline{\nu}}{\imath ^\ast \omega_{FS}(\lambda_x)^n}(x) = 1.
$$

We can generate optimal sets of Fubini-Study metrics by using iteratively the refined mass formula in Eq. \eqref{IPSmassformula} to further incorporate
optimally chosen $x$-\emph{mass-one} metrics. In particular, let us consider a set of Fubini-Study metrics $\{ \omega^q_{FS} \}_{q=1}^{Q}$ with $Q>0$
that we want to refine further by adding two extra metrics. In this case, we will search for the absolute maximum $x_{max}$ and minimum $x_{min}$ of the mass function $m_{IPS}(x,\{ \omega^q_{FS} \}_{q=1}^{Q})$,
and add the metrics $\omega_{FS}(\lambda_{x_{min}})$ and $\omega_{FS}(\lambda_{x_{max}})$ to the set
$$
\{ \omega^q_{FS} \}_{q=1}^{Q+2}\,\,\text{such that} \,\,  \omega^{Q+1}_{FS}=\omega_{FS}(\lambda_{x_{max}}),\,\,
\omega^{Q+2}_{FS}=\omega_{FS}(\lambda_{x_{min}}).
$$
Fig. \ref{refined_montecarlo} and Fig. \ref{histograms} show a few examples of Improved Point Sets on the Weierstrass
cubic defined above, and on a quintic three-fold. It is important to remark that this refinement of the Importance Sample method on Calabi-Yau manifolds 
is of independent interest. 
\smallskip

\begin{figure}
\begin{center}
\begin{tabular}{cc}
    \includegraphics[width=2.2in]{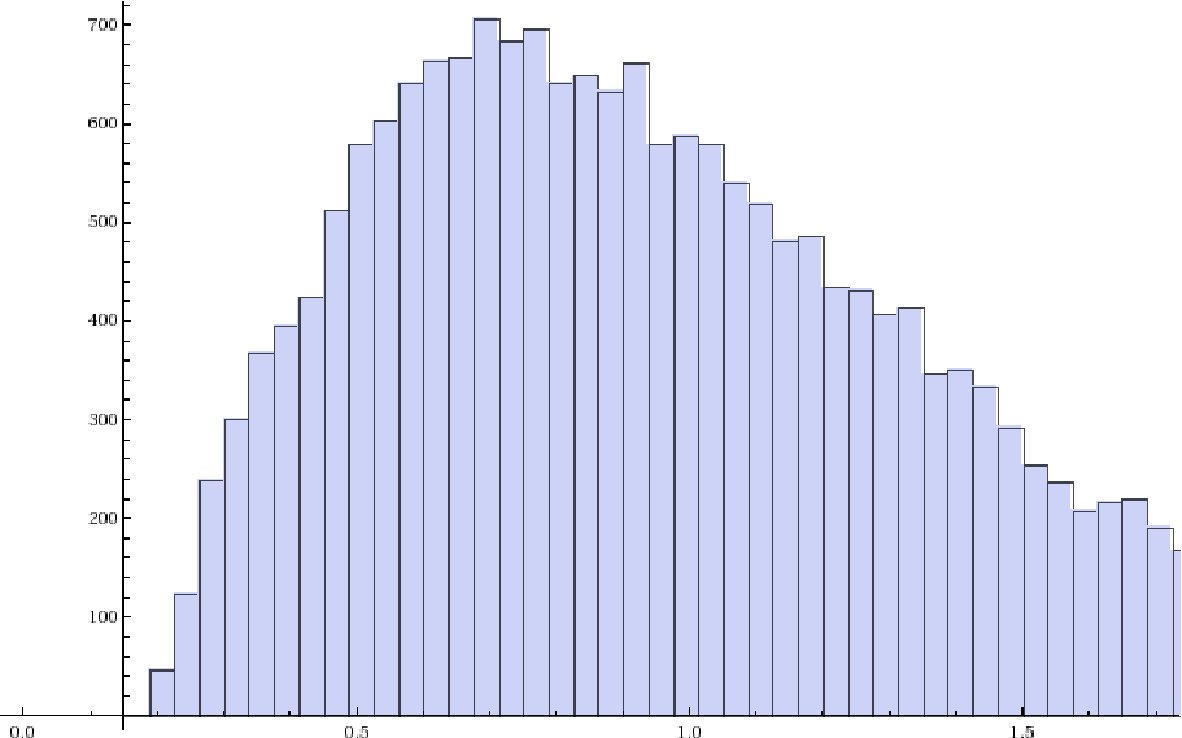} & 
    \includegraphics[width=2.2in]{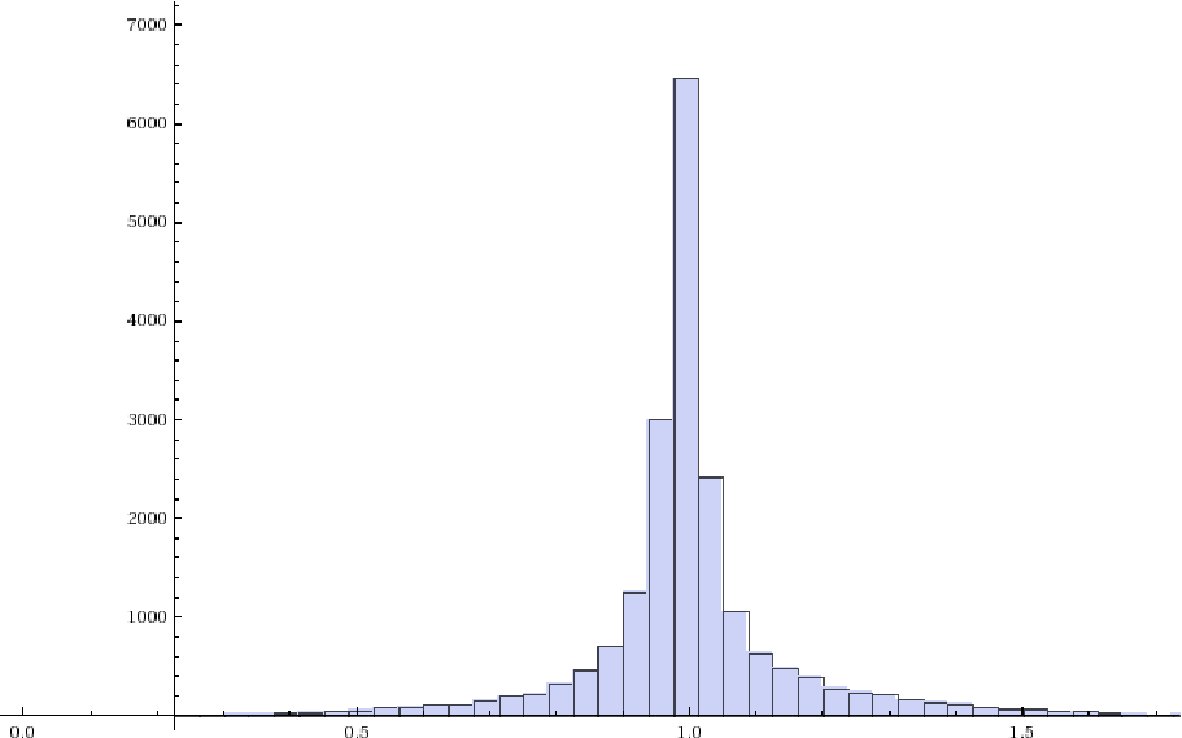} \\
\end{tabular}
\end{center}
  \caption{Distribution of masses for points on the Calabi-Yau quintic $Z_{0}^5+Z_{1}^5+Z_{2}^5+Z_{3}^5+Z_{4}^5 - 0.246\times Z_{0} Z_{1} Z_{2} Z_{3} Z_{4}$, using 1
  Fubini-Study metric (left) and 19 optimally chosen Fubini-Study metrics and subsets (right).}
    \label{histograms}
\end{figure}

\subsection{Example: the family of quintics}\label{method1}

Equipped with Eq. \eqref{WPexact}, Eq. \eqref{diffform} and the Monte Carlo method just described, 
one can evaluate Weil-Petersson metrics for a large class of families using deformations of holomorphic $n$-forms. 
For instance, one can compute the WP metric on the one-parameter family of quintic three-folds introduced in section 2.2,
\begin{equation}
P(Z)=Z_{0}^5+Z_{1}^5+Z_{2}^5+Z_{3}^5+Z_{4}^5 - 5t Z_{0} Z_{1} Z_{2} Z_{3} Z_{4}.\label{poly}
\end{equation}
By generating 2,000,000 points on the Fermat quintic ($t=0$) we found the Weil-Petersson metric to be
\begin{equation}
g_{t\bar{t}}(0) = 0.19205 \pm 0.00104, \label{WPvalue}
\end{equation}
with $0.00104$ the standard error. Eq. \eqref{WPvalue} should be compared with the exact value ($0.1922\cdots$) that we obtained by computing the volume of the quintic 
as a function of $t$ using integrals on its 3-cycles (see Eq. \eqref{cande}). In Fig. \ref{batman1} we evaluated the Weil-Petersson metric on
the same region of the $t$-plane studied in Fig. \ref{candelas}. We used one Fubini-Study metric in the IS method for most smooth quintics on the $t$-plane except  
for those quintics that were close to developing double-point singularities. In this case, we used the refinement of the IS method with 19 Fubini-Study metrics.

\begin{figure}
\begin{center}
    \includegraphics[width=3in]{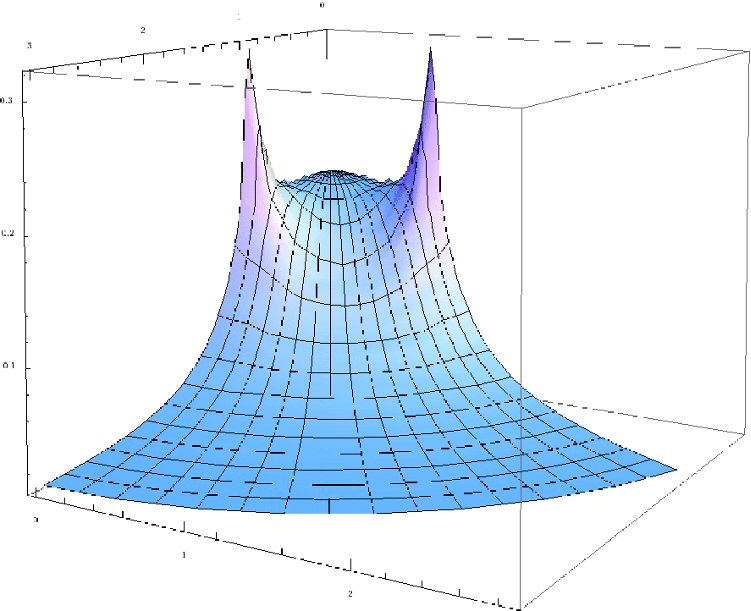}
\end{center}
  \caption{Evaluation of the Weil-Petersson metric (vertical axis) on the one-dimensional moduli space (horizontal $t$-plane) of Calabi-Yau quintic three-folds,  
$Z_{0}^5+Z_{1}^5+Z_{2}^5+Z_{3}^5+Z_{4}^5 - 5t Z_{0} Z_{1} Z_{2} Z_{3} Z_{4}$, using local deformations of the holomorphic form and the Importance Sampling method.}
    \label{batman1}
\end{figure}

In order to gain some insight on the computational complexity of this method, we compared these results with those of a simpler method.
In particular, another way to numerically compute the Weil-Petersson metric consists of approximating the K\"ahler potential on a local
neighborhood of the moduli space around the manifold where we are evaluating the WP metric.
To that end we evaluated the logarithm of the numerical volumes of several three-folds near the Fermat quintic ($t=0$), 
then we fitted a quadratic function to the random K\"ahler potential values, and computed its Hessian. 
Although this method is much simpler to implement, it is highly inefficient. For instance, if we evaluate the function
\begin{equation}
F(t,\overline{t})=-\log \left( \int_{Q_t} \nu_t\wedge \overline{\nu_t} \right),\label{log_vol}
\end{equation}
on 300 random quintics near $t=0$ and using 100,000 points on each three-fold $Q_t$ for the IS method,  
we find that the Hessian at $t=0$ is
$$
g_{t\bar{t}}(0) = 0.209693 \pm 0.03.
$$
This implies that using 15 times more points than in Eq. \eqref{WPvalue}, we were able to evaluate $g_{t\bar{t}}(0)$ with an error 30 times bigger. 
In Fig. \ref{quad_fit} we show the graph of the fitted function in Eq. \eqref{log_vol} using 300 points on the $t$-plane.

\begin{figure}
\begin{center}
    \includegraphics[width=3in]{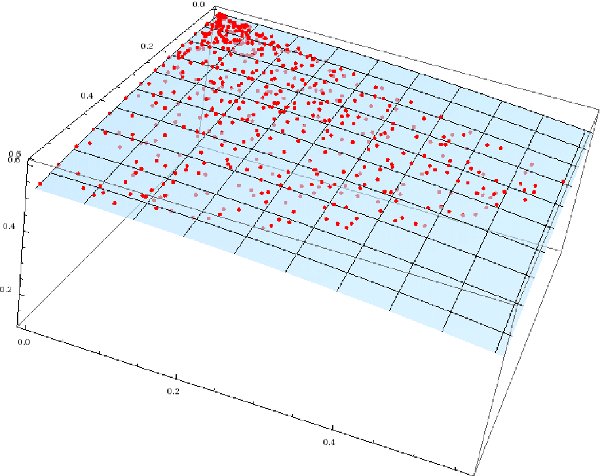}
\end{center}
  \caption{Quadratic approximation of the function $-\log(\int_X \nu_t\wedge\overline{\nu_t})$ around $t=0$, for 300 random Calabi-Yau Quintic 3-folds on the $t$ plane near $t=0$.}
    \label{quad_fit}
\end{figure}

\section{Numerical evaluation of the Weil-Petersson metrics using Donaldson's quantization approach}

A very powerful approach to constructing moduli spaces of polarized manifolds and the K\"ahler Weil-Petersson metrics associated with them
can be formulated within the mathematical framework of symplectic quotients and Geometric Invariant Theory (GIT).
The main strength of this approach is that it is not based on Eq. \eqref{kahlerpotential}, and therefore, it can be used to approximate
WP metrics on more general moduli spaces, such as spaces of constant scalar curvature K\"ahler (cscK) metrics and spaces of Hermite-Einstein metrics on vector bundles.
Although there exist several applications of this approach, we restrict our analysis to computing the WP metric on the moduli space of polarized Calabi-Yau manifolds.
This particular case helps to outline the main features of the method
and motivates further generalizations to other moduli spaces. 

The core of the method consists of the correspondence between infinite dimensional
symplectic quotients and sequences of finite dimensional symplectic quotients built from appropriate quantizations of the classical systems.
In particular, the moduli space of polarized varieties can be constructed as an infinite dimensional symplectic quotient \cite{DonI}.
One can define this symplectic reduction by considering the infinite dimensional space of complex structures $\mathcal{J} = \{ J\colon TX\to TX\,\colon J^2=-1  \}$ 
that make $X$, endowed with $\omega$, a K\"ahler manifold. One also needs a certain complexification of the group of Hamiltonian diffeomorphisms on $(X,\,\omega)$,
which we denote here as $\mathrm{Ham}_{c}(X,\,\omega)$. 
If for simplicity one assumes that $H^1(X,\mathbb{R})=0$, then $\mathrm{Ham}_{c}(X,\,\omega)$ consists of the set of diffeomorphisms on $X$ 
that preserve the compatibility of the symplectic form $\omega$ with the complex structure $J$:
$$
\{ f\colon X\longrightarrow X\,\, \colon\, \exists Q\in C^{\infty}(X,\mathbb{R} )\,\, \mathrm{such\,\, that}\,\, f^\ast \omega = \omega +2i\partial\overline{\partial}Q \}.
$$
Therefore, the quotient of $\mathcal{J}$ by $\mathrm{Ham}_{c}(X,\,\omega)$ is the set of isomorphism classes of integrable complex structures on $(X,\omega)$.

The K\"ahler structure on $X$ induces one on $\mathcal{J}$ by integration. Now, given that $\mathcal{J}$ is a K\"ahler manifold and that the action of $\mathrm{Ham}_{c}(X,\,\omega)$
on $\mathcal{J}$ preserves its associated symplectic form, one can ask for the corresponding moment map. Fujiki and Donaldson showed that
$$
\mathrm{Moment\,\, map} = scal(\omega)-scal_0,
$$
where $scal(\omega)$ is the scalar curvature and $scal_0$ its average over $X$. In the case of K\"ahler manifolds $X$ with trivial canonical bundle, 
the zeros of the moment map correspond to the space of K\"ahler Ricci flat metrics on $X$. This means that the ``quotient'' $\mathcal{J}//\mathrm{Ham}_{c}(X,\,\omega)$ is also
the moduli space of Ricci flat metrics on the polarized Calabi-Yau manifold $(X,\,\omega)$.
Additionally, the symplectic form on this quotient gives rise to a K\"ahler metric on the moduli space which is identical to the Weil-Petersson metric. 

\subsection{Quantization of the infinite dimensional GIT problem}\label{CY-quant}
Donaldson showed in \cite{DonII} how the infinite dimensional quotient described above can be understood 
as the classical limit of a certain finite dimensional symplectic quotient. 
This finite dimensional construction can be defined using the set of pairs $(X,\cL_X)$,
with $X$ a projective Calabi-Yau manifold and $\cL_X$ a polarization with specified Euler-Poincar\'e characteristic $\chi(X,\cL_X^k)$.
In the limit of large $k$, $\chi(X,\cL_X^k)$ is fixed to be
$$
\chi(X,\cL_X^k)=\chi(k),
$$ 
where $\chi(k)\in \mathbb{Q}[k]$ is a specified polynomial of degree $n$.
We denote the set of pairs $(X,\cL_X)$ as $\mathcal{H}_\chi$.
The symplectic manifold that we use to construct the symplectic quotient consists of the subset of smooth projective manifolds in $\mathcal{H}_\chi$.
We denote such a space as $\mathcal{H}_\chi^\infty$, and we endowed it with a 
K\"ahler structure using the pullback of the Fubini-Study two-form coming from the corresponding projective embeddings of $X$.
In particular, for each element $(X,\cL_X)$ in $ \mathcal{H}_\chi$, and for $k$ large enough, one obtains
a set of non-unique embeddings into the projective space 
$$\imath \colon X \hookrightarrow \mathbb{P}H^0(X,\cL_X^k)^*=\mathbb{P}^N.$$
Using these embeddings one can construct explicitly the corresponding K\"ahler metric in $\mathcal{H}_\chi^\infty$ as follows.

Let $Hilb(N,\chi)$  be the Hilbert scheme of subschemes of  $\mathbb{P}^N$ with fixed Hilbert polynomial $\chi$.
By Grothendieck \cite{Vie}, we know that $Hilb(N,\chi)$ is a quasi-projective scheme that contains $\mathcal{H}_\chi$. 
Moreover, there exists a universal family
$Univ_{N,\chi}$ over $Hilb(N,\chi)$, such that $Univ_{N,\chi} \subset Hilb(N,\chi)\times \mathbb{P}^N$. Also, there exists a natural map from 
$Univ_{N,\chi}$ to $\mathcal{H}_\chi$ and another one from $Univ_{N,\chi}$ to $\mathbb{P}^N$:
\begin{equation}
\begin{array}{lll}
Univ_{N,\chi} & \overset{\pi_2}{\longrightarrow } &  \mathbb{P}^N \\
\downarrow \pi_1 &  & \\
\mathcal{H}_\chi & & \label{diagram_l}.
\end{array}
\end{equation}
Using these maps and fixing the value of $k$, one can define the compatible K\"ahler metric on $\mathcal{H}_\chi^\infty$ as
\begin{equation} \label{pull-back}
\Omega_k= \pi_{1*}( \pi_2^*\omega_{FS} \cap [Univ_{N,\chi}]). 
\end{equation}

Similarly to the infinite dimensional construction, the action of $SL(N+1,\mathbb{C})$ on $\mathbb{P}^N$ induces a Hamiltonian action on
$\mathcal{H}_\chi^\infty$ which preserves the symplectic form defined in Eq. \eqref{pull-back}.
This group action gives rise to a moment map $$\mu \colon  \mathcal{H}_\chi^\infty\rightarrow Lie(SU(N+1))^*.$$ 
In particular, given an orthonormal basis of sections $\{s_\alpha\}$ of $H^0(X,\mathcal{L}^k)$, one can write this map as 
\begin{equation}\label{moment}
\mu(X)= \delta_{\alpha \bar \beta} - \frac{N+1}{\rm Vol(X)}\int_X \frac{s_\alpha \bar{s}_{\bar{\beta}}} {\sum_\gamma |s_\gamma|^2} \frac{\omega_{FS}^n}{n!}.
\end{equation}
Not every manifold $X$ admits a solution of this moment map. Only those
projective manifolds $(X,\mathcal{L}^k)\in \mathcal{H}_\chi^\infty$ that are polystable in the GIT sense admit a solution of Eq. \eqref{moment}. 
We denote the set of these polystable manifolds as $Hilb(N,\chi)^{ps}$, and we name a particular solution
of Eq. \eqref{moment} as a \emph{balanced embedding}.

There exists an extensive literature regarding the properties of these solutions, e.g. see \cite{YauI,DonII,DonV,Sa}. 
In particular, the balanced projective embedding defined in Eq. \eqref{moment} induces by restriction a metric on $X$ which is well studied.
We know that
for smooth Calabi-Yau varieties $X$ with a K\"ahler Ricci flat metric in the class $c_1(\mathcal{L})$,  
the corresponding sequence of balanced metrics indexed by $k$ converges to the Ricci flat metric
in the limit $k\rightarrow \infty $ \footnote{This statement is true
only if ${\rm Aut}(X,\cL)$ is discrete. However, in the case of a similar class of balanced metrics, known as $\nu$-balanced metrics and 
defined in the next subsection, the statement is true for any ${\rm Aut}(X,\cL)$; see \cite{Ke}.}. 
Moreover, this sequence of metrics is computable. In order to show how to compute the elements of this sequence, we first need to introduce a few definitions.
Let ${\rm Met}(\mathcal{L}^k)$ be the space of hermitian metrics on $\mathcal{L}^k$, and let ${\rm Met}(H^0(X,\mathcal{L}^k))$ be the space 
of metrics on the vector space $H^0(X,\mathcal{L}^k)$. There are two natural maps between ${\rm Met}(\mathcal{L}^k)$ and ${\rm Met}(H^0(X,\mathcal{L}^k))$.
First, the Hilbertian map $Hilb_k$ is defined as 
$$
Hilb_k\colon  {\rm Met}(\mathcal{L}^k)  \rightarrow {\rm Met}(H^0(X,\mathcal{L}^k)),
$$
with $Hilb_k(h)(s,\bar s) = \int_X |s|^2_h \frac{c_1(h)^n}{n!}$ and $c_1(h)$ the curvature of the metric $h \in {\rm Met}(\mathcal{L}^k) $. 
Second, the  injective Fubini-Study map $FS_k$ is defined as 
$$
FS_k\colon {\rm Met}(H^0(X,\mathcal{L}^k)) \rightarrow  {\rm Met}(\mathcal{L}^k) 
$$ 
with $FS_k(H)=\frac{1}{k}\log \left( \sum_{i=1}^{N_k} |S_i^H|^2 \right)$, and $S_i^H$ is an $H$-orthonormal basis\footnote{This definition is  independent of the choice of the basis.} of holomorphic sections of $H^0(X,\mathcal{L}^k)$.
The existence of a solution of Eq. \eqref{moment} is equivalent to the existence of a Hermitian metric $H_k$ on
$H^0(X,\mathcal{L}^k)$ that satisfies $Hilb_k(FS_k(H_k))=H_k$. Similarly, a solution of Eq. \eqref{moment} is equivalent to a fixed point of the $T$-map, 
with $T=Hilb_k \circ FS_k$. 
We know by the work of \cite{DonII,DonV,Sa} that the $T$-map admits a unique fixed point, the balanced metric $H_k$. 
Furthermore, the sequence of metrics $T^r(H^{(0)}_k)$, 
defined by the $r^{th}$ iteration of the $T$-map for a generic initial metric $H^{(0)}_k\in {\rm Met}(H^0(X,\mathcal{L}^k))$, converges exponentially fast in $r$ to the fixed
point $H_k$.
Donaldson has shown how these results can be translated into numerical methods that approximate K\"ahler Ricci flat metrics on $X$;
see \cite{Don}.

The moduli space of polarized Calabi-Yau manifolds that arises as the quotient 
$$Hilb(N,\chi)^{ps}//SL(N+1)
$$ 
is identical to the moduli space that arises as the infinite-dimensional symplectic ``quotient'' 
$$
\mathcal{J}//\mathrm{Ham}_{c}(X,\,\omega);
$$ 
additionally, the zeroes of both  moment maps describe the same K\"ahler Ricci flat metric on $X$ in the $k\to\infty$ limit.
Therefore, it is natural to assume that the sequence of K\"ahler metrics 
that $Hilb(N,\chi)^{ps}//SL(N+1)$ inherits from its symplectic reduction will converge 
to the K\"ahler metric that $\mathcal{J}//\mathrm{Ham}_{c}(X,\,\omega)$ inherits from its corresponding symplectic reduction.
As the metric that $\mathcal{J}//\mathrm{Ham}_{c}(X,\,\omega)$ inherits is the Weil-Petersson metric itself, 
we expect that the metric inherited from $(\mathcal{H}_\chi^\infty,\, \Omega_k)$ will approximate the WP metric
in the limit of large $k$.

Here, we do not provide a formal proof that this sequence of K\"ahler metrics actually converges to the WP metric. 
Nevertheless we construct such metrics, numerically compute some examples
and compare them with the exact WP metric. This constitutes a heuristic approach to understanding how both types of metrics relate to each other,
which we expect will contribute to the development of a formal proof of convergence.

\subsection{Quantization of the Weil-Petersson metric on the moduli space of Calabi-Yau manifolds}\label{mu-bal1}

In this subsection we derive an explicit 
expression for the K\"ahler metric that $Hilb(N,\chi)^{ps}//SL(N+1)$
inherits from the symplectic reduction of $\mathcal{H}_\chi^\infty$ 
endowed with the K\"ahler metric Eq. \eqref{pull-back}. We denote this \emph{quantized Weil-Petersson} metric as $\Omega_{k,\cT}$.
In particular, we provide an analytical formula for the restriction of $\Omega_{k,\cT}$ to the tangent space of a point in the moduli space
of polarized Calabi-Yau manifolds. 

Let $T_t\cT$ be the space of infinitesimal deformations of the complex structure on $(X_t,\, [\omega])$, i.e. 
the tangent space of a family of Calabi-Yau manifolds at $X_t$. Let 
$L^k$ be the complex line bundle on the differential manifold $X$ that has $c_1(L^k)=c_1(\cL^k)$.
Then, the holomorphic line bundle $\cL^k$ on $X_t$ and the complex structure $t$ on $X$ yield  
a Dolbeault operator $\bar{\partial}_t\colon \Omega^{p,q}(\cL^k)\to \Omega^{p,q+1}(\cL^k)$. 
In other words, $\cL^k$ can be defined as a pair $(L^k,\, \bar{\partial}_t)$ on $X$.  
Any deformation $v\in T_t\cT$ of the complex structure on $X_t$ induces a deformation of the Dolbeault operator 
$$
\bar{\partial}_{t+v} =\bar{\partial}_t  + v\partial_t +O(v^2).
$$ 
Therefore, if $\{ s_\alpha \}_{\alpha=0}^N$ spans $\ker \bar{\partial}_t = H^{0}(X_t,\, \cL^k)$, and $\{ s_\alpha + \delta_v s_\alpha \}_{\alpha=0}^N$
spans $\ker \bar{\partial}_{t+v} = H^{0}(X_{t+v},\, \cL^k)$, we can compute the infinitesimal deformations of the sections $\delta_v s_\alpha$ as
$$
\delta_v s_\alpha =-\overline{\partial}_{t}^{-1} \left(v \partial s_\alpha \right) +O(v^2).
$$ 
As the hermitian metric $h$ on $L^k$ induces a $L^2$ metric on $\Omega^0(X,L^k)$ and an $L^2$-completion $L^2(X,L^k)$ of the space of sections,
one can define precisely the inverse operator $\overline{\partial}_{t}^{-1}$ on the orthogonal complement $\ker\overline{\partial}_t^\perp \subset L^2(X, L^k)$.
Now, we can introduce a local formula for $\Omega_{k,\cT}$.

If $H_t\in {\rm Met}(H^0(X_t,\mathcal{L}^k))$ is the balanced metric of order $k$, and $v_1,v_2 \in T_t\cT$ are infinitesimal deformations in $H^{0,1}_{\bar{\partial}}(TX_{t})$, 
the inner product associated with the quantized WP K\"ahler metric $\Omega_{k,\cT}$ is
\begin{equation}
\Omega_{k,\cT}(v_1,v_2)=k^n H^{\alpha,\beta}_t \int_{X_t}  \langle \bar{\partial}^{-1} (v_1 \partial s_\alpha),{\bar{\partial}}^{-1} (v_2\partial s_\beta ) \rangle_{FS_k(H_t)} \frac{(\frac{1}{k}c_1(FS_k(H_t))^n}{n!} \label{def4},
\end{equation}
for $\{ s_\alpha \}$ an orthonormal basis with respect to $H_t$.

In practice, it is more convenient to use a slightly different metric that employs the 
volume form $\nu_t\wedge\overline{\nu_t}$ instead of the Fubini-Study volume form. 
Although both metrics converge to the same asymptotic in the large $k$ limit,
this second metric has several advantages in numerical applications.
In order to define this metric, one first needs a modified notion of the balanced metric, the $\nu$-\emph{balanced metric}.
In particular, if the $\nu$-Hilbertian map is
\begin{equation}
Hilb_{k,\nu}(h) = \int_X \vert s\vert^2_h \nu_t\wedge\overline{\nu_t}, \label{tamp}
\end{equation}
the $\nu$-balanced metric of order $k$ is a fixed point $H$ of the map $T_\nu=Hilb_{k,\nu}\circ FS_k$; see \cite{Ke}.
Then, in an abuse of notation, we define the quantized Weil-Petersson metric of order $k$ (modification of Eq. \eqref{def4}) by the inner product
\begin{equation}
\Omega_{k,\cT}(v_1,v_2)=k^n H^{\alpha,\beta}_t \int_{X_t}  \langle \bar{\partial}^{-1} (v_1 \partial s_\alpha),{\bar{\partial}}^{-1} (v_2\partial s_\beta ) \rangle_{FS_k(H)} \nu_t\wedge\overline{\nu_t} \label{def6},
\end{equation}
where $H$ denotes here the $\nu$-balanced metric.

As the $\nu$-balanced metric of order $k=\infty$, that we denote as $\omega_{k=\infty,t}$, is the Calabi-Yau metric
one can infer from the asymptotic expansion of the Bergman kernel \cite[Remark 5.1.5]{M-M} that  
$$
\Vert \omega_{FS(H)}- \omega_{\infty,t}\Vert_{C^\infty} =O\left(\frac{1}{k^2}\right).
$$
Therefore, it follows that the sequence of modified metrics introduced in Eq. \eqref{def6} and the sequence defined in Eq. \eqref{def4} converge to the same metric.
In the following subsections, we introduce numerical methods to evaluate explicitly the metric defined in Eq.  \eqref{def6} and compute several examples
on the one-dimensional family of quintics.

\subsection{Numerical computation of the quantized Weil-Petersson metric}

In \cite{Don}, Donaldson showed how to numerically compute \emph{balanced metrics} and $\nu$-balanced metrics, in order to approximate cscK metrics on varieties and K\"ahler Ricci flat metrics on Calabi-Yau manifolds. Such metrics can be constructed explicitly if one has analytic control on the projective embeddings and can evaluate integrals on $X$. 
Similar technical difficulties arise if we want to evaluate quantized Weil-Petersson metrics on moduli spaces (i.e. Eq. \eqref{def6}). For any basis of sections $\{ s_\alpha \}_{\alpha=1}^{N+1}$, and definite positive Hermitian matrix $H_{\alpha\bar{\beta}}$ on the vector space $H^0 (X_{t_0},\,\cL^k)$, one can introduce a $L^2$-product on $\Omega^0(\cL^k)$ defined as
\begin{equation}
\langle \sigma_1,\,\sigma_2\rangle = \frac{1}{{\rm Vol}(X_{t_0})}\int_X \frac{\sigma_1\overline{\sigma_2}}{(H^{-1})^{\bar{\gamma}\delta} s_\delta\bar{s}_{\bar{\gamma}}} \nu_{t_0}\wedge\overline{\nu}_{t_0}, \label{L2bal}
\end{equation}
with ${\rm Vol}(X_{t_0})=\int_X \nu_{t_0}\wedge\overline{\nu}_{t_0}$. 
If the restriction of Eq. \eqref{L2bal} on $H^0(X_{t_0},\,\cL^k) \subset L^2(X_{t_0}, \cL^k)$ is identical to the inner product defined by $H$, one says that the Fubini-Study metric defined by $H$ on $\mathbb{P}H^0(X_{t_0},\,\cL^k)^\ast$ is $\nu$-\emph{balanced}\footnote{In this section, we refer to the $\nu$-balanced metrics simply as ``balanced metrics.''} as we have seen in Section \ref{mu-bal1}. One can find the balanced metric on $X_{t_0}\hookrightarrow\mathbb{P}^N$ by introducing any initial definite positive Hermitian matrix $H(0)$ on $H^0 (X_{t_0},\,\cL^k)$, and iterating the map, $T\colon {\rm Met} (H^0 (\cL^k))\to {\rm Met} (H^0 (\cL^k))$
\begin{equation}
H(q+1)_{\alpha\bar{\beta}}=T(H(q))_{\alpha\bar{\beta}} = \frac{N+1}{\mathrm{Vol}(X_{t_0})}\int_X \frac{s_\alpha\bar{s}_{\bar{\beta}}}{(H(q)^{-1})^{\bar{\gamma}\delta} s_\delta\bar{s}_{\bar{\gamma}}} \nu_{t_0}\wedge\overline{\nu}_{t_0},\label{tmap}
\end{equation}
up to reaching convergence with the limit point $H_{t_0}=H({+\infty})$. \\
Let $\{ t_a \}_{a=1}^{a=\dim \cT}$ be a local coordinate system on the moduli space $\cT$ and $v=v^a\partial_{t_a} \in T_{t_0}\mathcal{T}$ be an infinitesimal deformation
of $t_0$.  In order to find the infinitesimal deformation of the balanced embedding of $X_{t_0 + v}$ into $\mathbb{P}^N$ and evaluate 
Eq. \eqref{def6}, it is convenient to work with a family of line bundles on $X$. If $\pi\colon \cX\to \cT$ denotes a family of complex structures on $X$, with $\pi^{-1}(t)=X_t$,
there exists a holomorphic line bundle $\cS^k \to\cX$ such that $\cS^k \vert_t = \cL_t^k \to X_t$. In other words, the restriction of $\cS^k$
to the fibers of $\pi\colon \cX\to \cT$ is identical to the holomorphic polarization $\cL_t^k$ on $X_t$. The natural Hermitian structure on $\cL_t^k\to X_t$ whose curvature is the corresponding K\"ahler Ricci flat metric, lifts to a hermitian structure on $\cS^k\to \cX$. When we approximate the K\"ahler Ricci flat metrics on $X_t$ by balanced metrics, $\cS^k\to \cX$ also admits a compatible hermitian structure.
More precisely, if $\{ s_\alpha(t,\bar{t}) = \eta_\alpha(t,\bar{t})\hat{e}_t \}_{\alpha=1}^{N+1}$ is a basis of holomorphic sections for $H^0 (X_{t},\,\cL^k)$, $\hat{e}_t$ is the holomorphic frame in a local trivialization, the parameters $t$ denote the moduli
dependence, and $H_t$ is the associated balanced matrix, we can endow $\cS^k \to \cX$ with the hermitian metric
\begin{equation}
h_t=\frac{\hat{e}_t\otimes \hat{e}_t^\ast}{{(H_t^{-1})^{\bar{\gamma}\delta} s(t)_\delta\bar{s}(\bar{t})_{\bar{\gamma}}}}.\label{hermitianmetric}
\end{equation}
Therefore, given the diffeomorphism between $X_{t_0}$ and $X_{t_0+v}$, defined in local holomorphic coordinate charts (see Eq. \eqref{diffeo}), as
$$
y^i = w^i + v^{a}\vartheta_a^{i}(w,\bar{w}) + O(v^2),
$$
one can compute the infinitesimal deformation of the embedding $X_{t_0+v}\hookrightarrow \mathbb{P}^N$, as the 
covariant derivative of $s_\alpha(t)$ 
\begin{equation}
\nabla_v \eta_\alpha\hat{e}_t = v^a\frac{\partial \eta_\alpha}{\partial t_a}\hat{e}_t + v^a h^{-1}_t \frac{\partial h_t}{\partial t_a} \eta_\alpha\hat{e}_t, \label{cov_der}
\end{equation}
where $\frac{\partial \eta_\alpha}{\partial t_a} =\frac{\partial \eta_\alpha}{\partial w^i} \vartheta_a^{i}(w,\bar{w})$.
In other words, if $\hat{e}(y)$ is a holomorphic frame for $\cL_{t_0+v}^k\to X_{t_0+v}$, we can write the basis of holomorphic sections as
\begin{equation}
s_\alpha (y) = \eta_\alpha(y)\hat{e}(y) = \eta_\alpha (w) \hat{e}(w) + \nabla_v \eta_\alpha(w,\bar{w})\hat{e}(w) + O(v^2). 
\end{equation}
The proof is straightforward. Thus, $\nabla_v \eta_\alpha \hat{e}$ are smooth sections in $L^2(X_{t_0}, \cL^k)$ that represent components of vector fields along $T^{1,0}\vert_{X_{t_0}} \mathbb{P}^N$. The sections $\nabla_v \eta_\alpha \hat{e}\in L^2(X_{t_0}, \cL^k)$, can be expressed as the sum 
of a holomorphic section plus a non-holomorphic section, because of the decomposition $$L^2(X_{t_0}, \cL^k)=H^0(X_{t_0}, \cL^k) \oplus H^0(X_{t_0}, \cL^k)^{\perp}$$ under the $L^2$-metric defined in Eq. \eqref{L2bal}. As we need the normal components of $\nabla_v \eta_\alpha \hat{e}$
to $H^0(X_{t_0}, \cL^k)$, we have to project out the holomorphic part, $P_{t_0}\nabla_v \eta_\alpha \hat{e} \in H^0(X_{t_0}, \cL^k)$. The holomorphic part of $\nabla_v \eta_\alpha \hat{e}$ can be computed using the Bergman kernel projector $P_{t_0}\colon L^2(X_{t_0}, \cL^k)\to H^0(X_{t_0}, \cL^k)$. In an orthonormal basis $\{ s_\alpha^\prime \}_{\alpha=1}^{N+1}$ one can express $P_{t_0}$ as
\begin{equation}
P_{t_0}(\sigma) = \frac{N+1}{{\rm Vol}(X_{t_0})} \sum_\alpha s_{\alpha}^\prime \int_X \frac{\sigma\overline{s}^{\prime\alpha}} {(H_{t_0}^{-1})^{\bar\gamma \delta} s^\prime_\delta\overline{s}^{\prime}_{\gamma}} \nu_{t_0}\wedge\overline{\nu}_{t_0}. \label{proj}
\end{equation}
Therefore, the term $(Id-P_{t_0})\nabla_v \eta_\alpha \hat{e}$ denotes the projection of $\nabla_v \eta_\alpha \hat{e}$ onto the orthogonal complement in $\Gamma\left( T^{1,0}\vert_X \mathbb{P}^N \right)$ of the subspace defined by the infinitesimal action of $GL(N+1,\mathbb{C})$, which is isomorphic to $H^0(X_{t_0}, \cL^k)$. Hence, the quantized Weil-Petersson metric in Eq. \eqref{def6} can be written as:
\begin{equation}
\Omega_{k,\cT}(v_1,\, v_2) =\frac{\left( H_{t_0}^{-1}\right) ^{\bar{\beta}\alpha} }{{\rm Vol}(X_{t_0})} \int_X \frac{\left( (Id-P_{t_0})\nabla_{v_1} \eta \right)_\alpha \overline{\left( (Id-P_{t_0})\nabla_{v_2} \eta \right)_{\bar{\beta}}} }{(H_{t_0}^{-1})^{\bar{\gamma}\delta} \eta_\delta\bar{\eta}_{\bar{\gamma}}} \nu_{t_0}\wedge\overline{\nu}_{t_0}. \label{quantmet}
\end{equation}
Here, the basis of sections $\{ s_\alpha=\eta_\alpha \hat{e}  \}_{\alpha=1}^{N+1}$ is not necessarily orthonormal, although due to the simplicity of $P_{t_0}$ when expressed in an orthonormal basis (see Eq. \eqref{proj}), it is convenient to work with $\{ s_\alpha \}_{\alpha=1}^{N+1}$ orthonormal (where $H_{t_0}=Id$).

Thus, evaluating Eq. \eqref{quantmet} involves the following algorithm:
\begin{enumerate}
\item Building an explicit basis of sections $\{ s_\alpha=\eta_\alpha \hat{e}  \}_{\alpha=1}^{N+1}$ for $H^0 (X_{t_0},\,\cL^k)$, with $k=1,\ldots, k_{max}$, and $k_{max}$ some maximum value of $k$ that one can handle numerically.
\item Developing a numerical algorithm to evaluate integrals on $X$ under the measure $\nu_{t_0}\wedge\overline{\nu}_{t_0}$, as we did in Section 3.1.
\item Computing the balanced metric $H$ by iteration of the $T$ map \eqref{tmap}.
\item Choosing a basis of infinitesimal diffeomorphisms $\vartheta_a^{i}(w,\bar{w})$ on $X_{t_0}$, isomorphic to the basis $\frac{\partial}{\partial t_a}$ for $T_{t_0}\cT\simeq H^{1}(X_{t_0},\Omega^{n-1})$.
\item Solving the linearized balanced equations for $\partial_{t_a} H_{\alpha\bar{\beta}}$ at $t=t_0$:
\begin{eqnarray}\hspace*{1cm}
\frac{\partial H_{\alpha\bar{\beta}}}{\partial t_a} &=& c_0 \int_X \frac{\nabla_a \eta_\alpha \hat{e} \bar{s}_{\bar{\beta}}}{(H^{-1})^{\bar{\gamma}\delta} s_\delta\bar{s}_{\bar{\gamma}}} \nu_{t_0}\wedge\overline{\nu}_{t_0}
+c_0 \int_X \frac{s_\alpha\bar{s}_{\bar{\beta}}}{(H^{-1})^{\bar{\gamma}\delta} s_\delta\bar{s}_{\bar{\gamma}}} \frac{\partial \nu_{t}}{\partial t_a}\wedge\overline{\nu}_{t_0}  \nonumber \\
&&-\frac{c_0}{ \mathrm{Vol}(X_{t_0})} \int_X \frac{\partial \nu_{t}}{\partial t_a}\wedge\overline{\nu}_{t_0} \times \int_X \frac{s_\alpha\bar{s}_{\bar{\beta}}}{(H^{-1})^{\bar{\gamma}\delta} s_\delta\bar{s}_{\bar{\gamma}}} \nu_{t_0}\wedge\overline{\nu}_{t_0}, \label{linbal}
\end{eqnarray}
where $c_0=\frac{N+1}{\mathrm{Vol}(X_{t_0})}$. We obtain these relationships by differentiating the condition on $H$ to be a fixed point of the $T$ map and the fact that the Bergman function associated with the balanced metric is constant. Eq. \eqref{linbal} is a non-trivial system of linear equations, as $\partial_{t_a} H_{\alpha\bar{\beta}}$ is contained in the $\nabla_a \eta_\alpha$ term. One can solve Eq. \eqref{linbal} by using Gauss' elimination method, or one can solve it iteratively by setting  $\partial_{t_a} H_{\alpha\bar{\beta}}(q=0)=0$ as the initial value and interpreting Eq. \eqref{linbal} as a linearized $T$ map.
\item Computing $\nabla_a \eta_\alpha$, given $\frac{\partial H_{\alpha\bar{\beta}}}{\partial t_a}$, and its projection $(Id-P_{t_0})\nabla_{a}\eta_\alpha$  using Eq. \eqref{proj}.
\item And finally, evaluating the inner products Eq. \eqref{quantmet}.
\end{enumerate}

\subsection{Example: the family of quintics}\label{example2}

Here, we evaluated the algorithm in a similar fashion as we did in Section \ref{method1}.
In particular, we implemented the algorithm that we have just described, for the family of quintic three-folds $Q_t$ in $\mathbb{P}^4$ 
defined by the polynomial
$$
P(Z)=Z_{0}^5+Z_{1}^5+Z_{2}^5+Z_{3}^5+Z_{4}^5 - 5t Z_{0} Z_{1} Z_{2} Z_{3} Z_{4}.
$$
We studied the region of the $t$-plane given by $0< \vert t \vert \leq 3 $ and $0\leq \arg(t) <  2\pi /5$, as we 
did in the examples of sections 2 and 3. We divided the region in a lattice of more than 300 points, and 
computed the corresponding balanced metrics for embeddings in linear spaces of sections, up to degree $k=6$.
We chose monomials of degree $k$ defined on $\mathbb{P}^4$, modulo the ideal generated by $P(Z)$, as the 
basis of sections. We evaluated the integrals that appear in the $T$ map in Eq. \eqref{tmap} by using the Monte Carlo method 
described in Section 3. In order to compute the variation of the sections $\frac{\partial s_\alpha}{\partial t}$, 
we used the infinitesimal diffeomorphism defined by Eq. \eqref{vectorfield}, with
\begin{equation}
\frac{\partial \eta_\alpha}{\partial t} = \sum_{i=1}^4 \frac{\partial \eta_\alpha}{\partial w_i} \frac{\partial w_i}{\partial t}=
\sum_{i=1}^4 \frac{\partial \eta_\alpha}{\partial w_i} \vartheta^{i}(w,\bar{w}). \label{pertpol}
\end{equation}
For this family of quintics, Proposition \ref{prop1} defines the vector field $\vartheta^{i}(w,\bar{w})$ as
\begin{equation}
\vartheta^{i}(w,\bar{w}) = - \frac{G^{i\bar{\jmath}} \frac{\partial \bar{p}(\bar{w})}{\overline{\partial} \overline{w}_{\bar{\jmath}}}}{G^{m\bar{n}} \frac{\partial \bar{p}(\bar{w})}{\overline{\partial} \overline{w}_{\bar{n}}}\frac{\partial p(w)}{\partial w_m }
} (-5 w_{1} w_{2} w_{3} w_{4}), \label{vectorfieldquintic}
\end{equation}
with $G^{i\bar{\jmath}}$ the inverse of the Fubini-Study metric in $\mathbb{P}^4$, and $w_i=Z_i/Z_0$ local coordinates on $Q_t\subset\mathbb{P}^4$.
Given the Hermitian metric $h_t$ defined in Eq. \eqref{hermitianmetric}
we can compute the covariant derivative $\nabla_t \eta_\alpha \hat{e}$ as
\begin{eqnarray*}
\nabla_t \eta_\alpha \hat{e} &=& \sum_{i=1}^4 \frac{\partial \eta_\alpha}{\partial w_i} \vartheta^{i}(w,\bar{w})\hat{e}\\
&& - \frac{\partial}{\partial t} \left({(H_t^{-1})^{\bar{\gamma}\delta} \eta(t)_\delta\bar{\eta}(\bar{t})_{\bar{\gamma}}} \right) \frac{\eta_\alpha \hat{e}}{{(H_t^{-1})^{\bar{\gamma}\delta} \eta(t)_\delta\bar{\eta}(\bar{t})_{\bar{\gamma}}}},
\end{eqnarray*}
which we evaluated by using Eq. \eqref{pertpol}, and solving the linearized balanced equations in Eq. \eqref{linbal} for $\frac{\partial H_{\alpha\bar{\beta}}}{\partial t}$. The method that we implemented to solve the linearized balanced equations consisted of iterating the linearized $T$ map; this iterating scheme reached good estimates of the solutions within 5 or 6 iterations.
\begin{figure}
\begin{center}
\begin{tabular}{ccc}
    \includegraphics[width=1.5in]{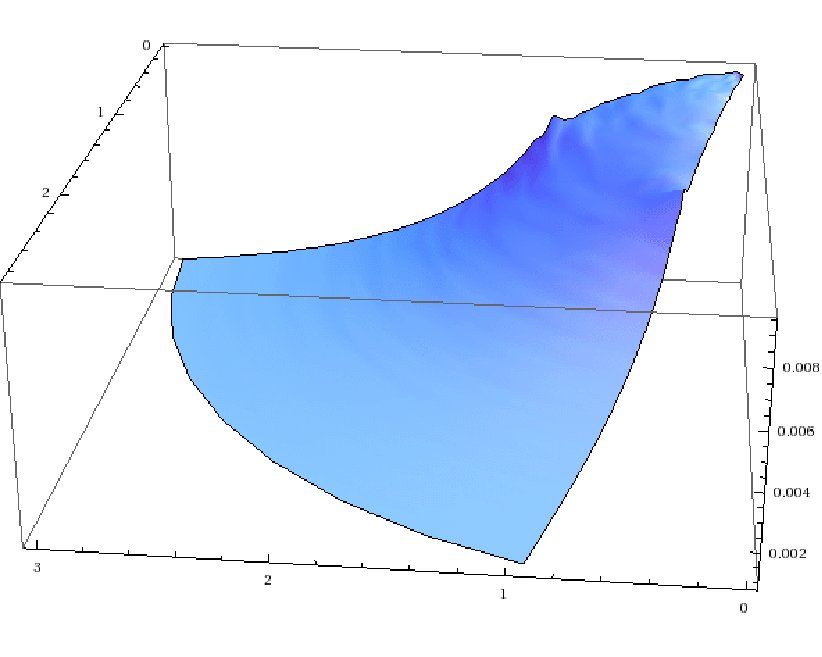}&
    \includegraphics[width=1.5in]{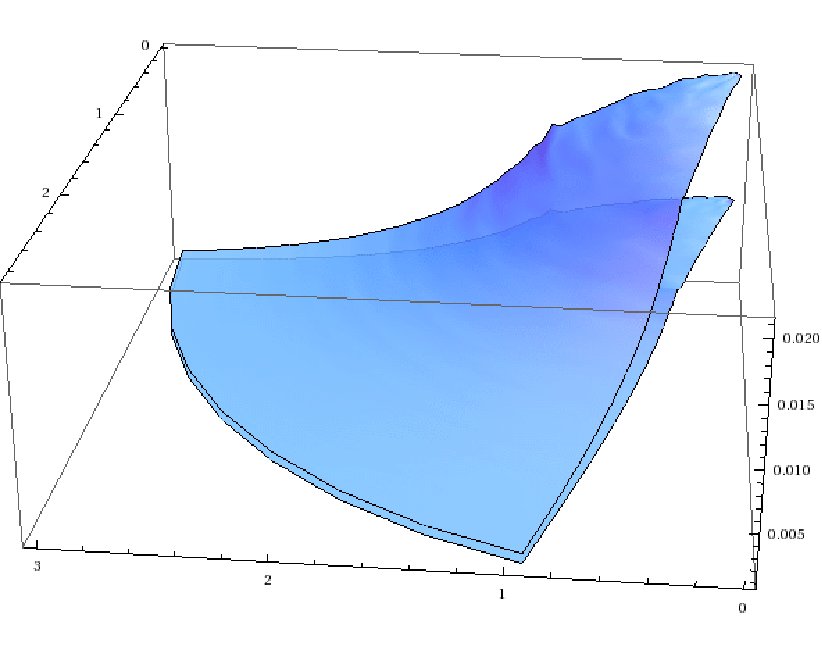}&
    \includegraphics[width=1.5in]{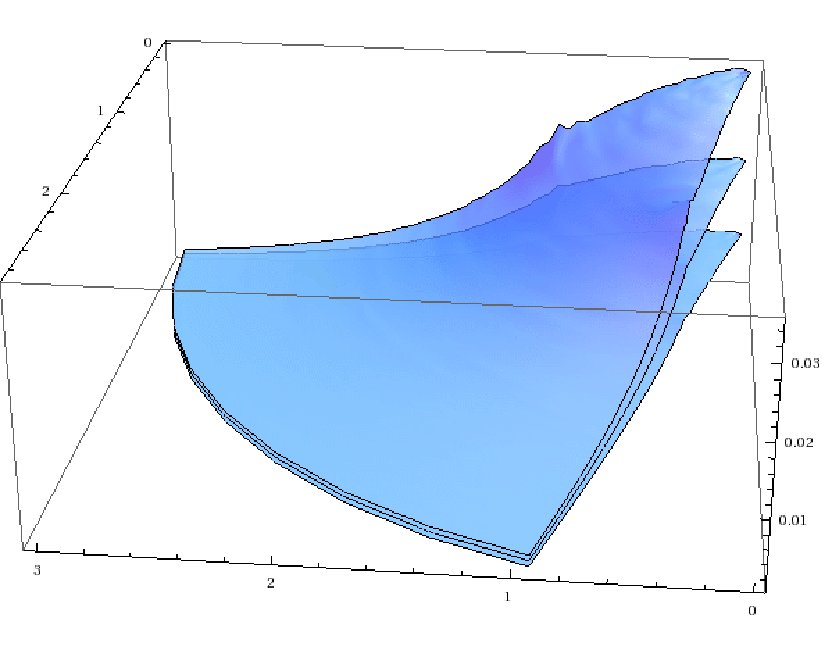}\\ \\
    \includegraphics[width=1.5in]{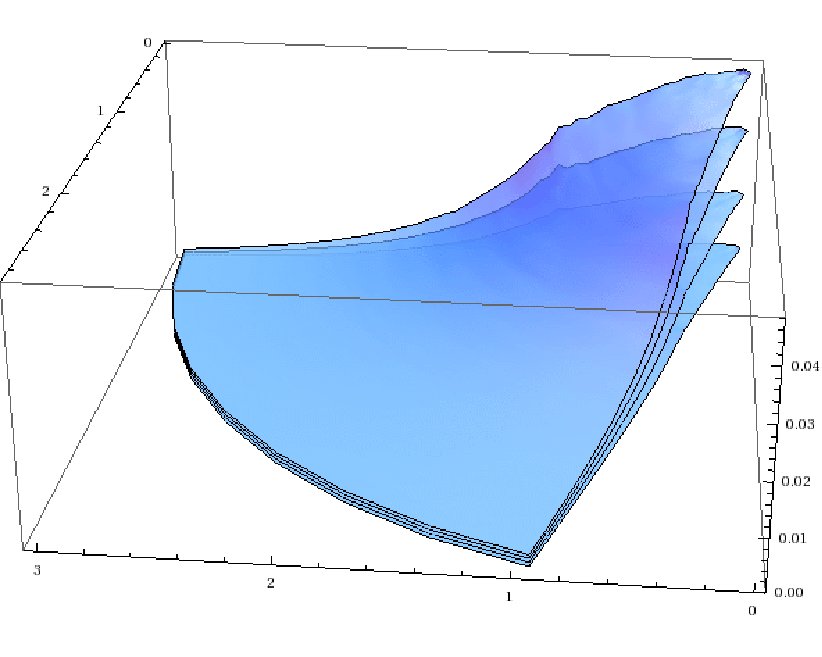}&
    \includegraphics[width=1.5in]{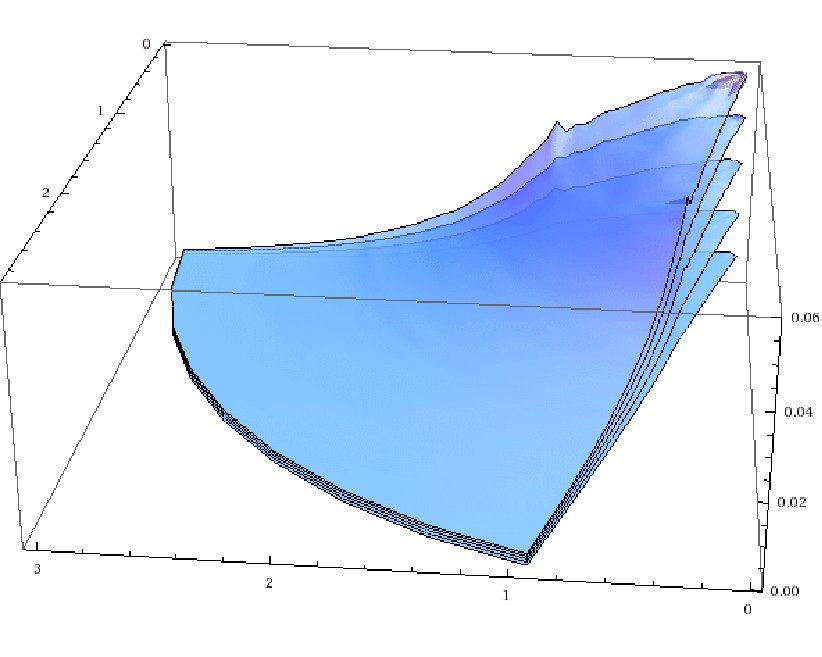}&
    \includegraphics[width=1.5in]{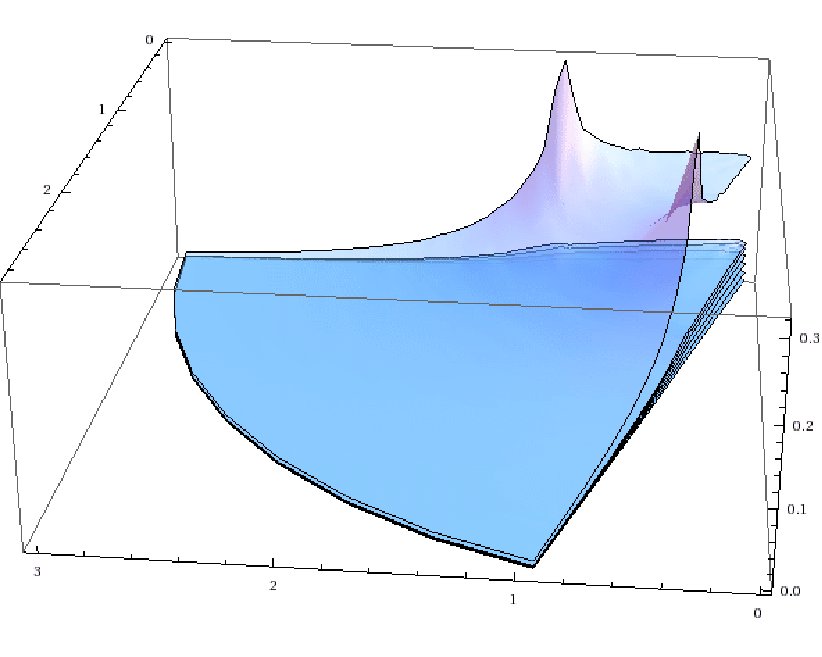}
\end{tabular}
\end{center}
  \caption{Quantized Weil-Petersson metrics (vertical axis) on the $t$-plane (horizonal plane), 
   of Calabi-Yau Quintic threefolds  $Z_{0}^5+Z_{1}^5+Z_{2}^5+Z_{3}^5+Z_{4}^5 - 5t Z_{0} Z_{1} Z_{2} Z_{3} Z_{4}$,
  for  $k=1$, 2, 3, 4, 5, and 6.}
    \label{quantized_metrics}
\end{figure}
In Fig. \ref{quantized_metrics} we plot the sequence of metrics $\Omega_{k,\cT}$, defined in Eq. \eqref{quantmet}, for $k=1,2,\ldots, 6$. The time taken
to compute each value $\Omega_{k,\cT}(t,\bar{t})$ per point in the $t$-plane was approximately equal to $4$ times the time needed to compute the balanced metric. One can observe that for large $\vert t \vert$, the rate of convergence of the sequence is higher than in other
regions of the $t$-plane. In points near the Fermat quintic, $t=0$, and for $k=6$, the quantized K\"ahler metric is approximately $0.07$ vs the exact value $0.19$. 
If we extrapolate the rate of convergence observed for the first six values of $k$, one expects to attain deviations smaller than $0.01$ in this region of the $t$-plane when $k>12$. The worst rate of convergence is located near the points $t=\exp(2\sqrt{-1}\pi\mathbb{Z}/5)$, where the quintic develops double point singularities. In this region of the family, the approximation of the corresponding Ricci flat metric by $\nu$-balanced metrics is also less accurate. 
One should develop further techniques to approximate accurately the metric near singular points of the moduli space.

One can explain heuristically why, for a fixed value of $k$, this scheme is not as accurate near singular points. In particular, the limit $k\gg 1$ corresponds to the semiclassical limit, in K\"ahler geometric quantization, of $(X,\omega)$ with Planck's constant $\hbar=\frac{{\rm Vol}(X_t)^{1/n}}{k}$. Due to quantum uncertainty in regions of volume smaller than $\hbar^n$, one expects that accurate approximations of geometric features in $X$ occur when the size of such features is bigger than $\frac{{\rm Vol}(X_t)}{k^n}$. Therefore, as a singularity is a geometric object of zero volume, these numerical constructions should fail to give accurate approximations 
near singularities. This limitation to approximate non-smooth objects using polynomial approximations 
is similar to the so-called Runge phenomenon in numerical analysis.

\subsection{Extra remarks}

Another---more difficult and slower---way to approximate Weil-Petersson metrics involves evaluating the Weil-Petersson formula itself on a family of balanced metrics. 
In other words, instead of using Eq. \eqref{quantmet} to approximate the Weil-Petersson metric one could evaluate the WP metric itself on a family of metrics as
(see for instance \cite{ST})
\begin{equation}
\Upsilon_k(v_1,\, v_2) = \frac{1}{{\rm Vol}(X_{t})} \int_X 
v_1^a \bar{v}_2^{\bar{b}} g_t^{i\bar{\jmath}} \frac{\overline{\partial}}{\partial \overline{w}_{\bar{\jmath}}} \left(h_t^{-1} \frac{\partial h_t}{\partial t_a}  \right) \left ( \frac{\overline{\partial}}{\overline{\partial} \overline{w}_{\bar{\imath}}} \left(h_t^{-1} \frac{\partial h_t}{\partial t_{b}}  \right) \right)^\ast
\nu_{t}\wedge\overline{\nu}_{t},
\label{schumtoma}
\end{equation}
with $h_t$ the family of balanced metrics defined in Eq. \eqref{hermitianmetric}. 
As Eq. \eqref{schumtoma} becomes the definition of the Weil-Petersson metric when 
$h_t$ is the Calabi-Yau metric on the polarization,  one expects that if $h_t$ is balanced Eq. \eqref{schumtoma} will converge to the Weil-Petersson metric 
in the $k\to \infty$ limit. We have implemented an algorithm that computes this metric on the family of quintics that we have studied in this paper. 
We found that the numerical calculation itself is much slower than the numerical evaluation of Eq. \eqref{quantmet}. For instance, to compute Eq. \eqref{schumtoma} for $k=3$ took as much time as computing Eq. \eqref{quantmet} for $k=6$. Due to limitations with the speed of the numerical calculation, 
we decided not to resume a detailed analysis of a numerical method based on Eq \eqref{schumtoma}.
\begin{figure}
\begin{center}
    \includegraphics[width=3in]{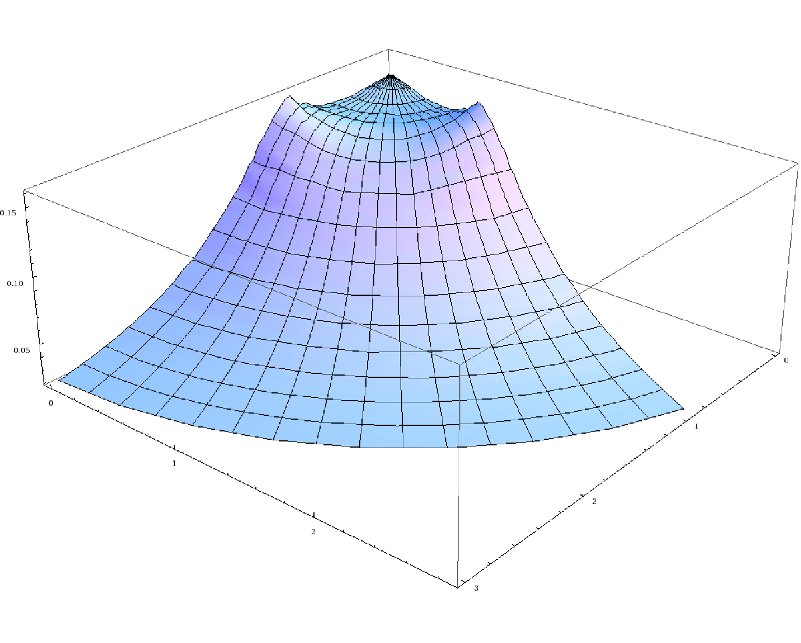}
\end{center}
  \caption{Approximation of the Weil-Petersson metric of Quintic 3-folds,  $Z_{0}^5+Z_{1}^5+Z_{2}^5+Z_{3}^5+Z_{4}^5 - 5t Z_{0} Z_{1} Z_{2} Z_{3} Z_{4}$, using Eq. \eqref{schumtoma} and a family of $k=1$ balanced metrics.}
    \label{harm}
\end{figure}
\medskip


\bigskip
\begin{center}
{\textsc{Acknowledgments}}
\end{center}
The authors are very grateful to S.K. Donaldson for many helpful conversations. They would also like to thank R.P.  Thomas for 
valuable suggestions. J. K. thanks  F. Bogomolov, R. Berman, and A. Soyeur for their suggestions. 
S.L. is particularly thankful to R. Karp for many detailed comments on this manuscript.

\end{document}